\documentclass[a4paper,12pt]{amsart}

\pagestyle{headings}

\oddsidemargin=0cm \evensidemargin=0cm%
\textwidth=450pt

\setlength{\footskip}{20pt}

\makeindex

\setcounter{tocdepth}{1}
\usepackage{amsfonts,graphics,amsmath,amsthm,amsfonts,amscd,amssymb,amsmath,latexsym,euscript, enumerate}
\usepackage{epsfig,url}
\usepackage{flafter}
\usepackage[all,cmtip,line]{xy}
\usepackage{array}
\usepackage[english]{babel}
\usepackage{overpic}
\usepackage{subfig}
\usepackage{multirow}

\usepackage{wrapfig}
\usepackage[shortlabels]{enumitem}
\setlist[enumerate, 1]{1\textsuperscript{o}}

\usepackage[usenames,dvipsnames,svgnames,table]{xcolor}
\usepackage{graphicx}
\usepackage[bookmarks=true,bookmarksnumbered=true,
 pdftitle={},
 pdfsubject={},
 pdfauthor={},
 pdfkeywords={TeX, dvipdfmx, hyperref, color,},
 colorlinks=true, linkcolor=RoyalBlue, citecolor=Emerald]
 {hyperref}

\newtheorem{theorem}{Theorem}[section]
\newtheorem{lemma}[theorem]{Lemma}

\newtheorem{question}[theorem]{Question}

\theoremstyle{corollary}
\newtheorem{corollary}[theorem]{Corollary}

\theoremstyle{definition} 

\newtheorem{definition-lemma}[theorem]{Definition-Lemma}

\newtheorem{example}[theorem]{Example}

\theoremstyle{remark}
\newtheorem{remark}[theorem]{Remark}

\numberwithin{equation}{section}

\newcommand{\C}{\mathbb{C}}
\newcommand{\R}{\mathbb{R}}
\newcommand{\Z}{\mathbb{Z}}

\newcommand{\Q}{\mathbb{Q}}

\DeclareMathOperator{\im}{Image}

\DeclareMathOperator{\vol}{vol}
\DeclareMathOperator{\val}{val}


\newcommand{\okbd}{\Delta}
\newcommand{\okval}{\Delta^{\val}}
\newcommand{\oklim}{\Delta^{\lim}}

\def\BDPP{\operatorname{BDPP}}

\def\Spec{\operatorname{Spec}}

\def\mult{\operatorname{mult}}

\def\codim{\operatorname{codim}}
\def\var{\operatorname{Var}}




\title[Subadditivity of Okounkov bodies]
{On subadditivity of Okounkov bodies for \\algebraic fiber spaces}

\begin{document}

\author{Sung Rak Choi}
\address{Department of Mathematics, Yonsei University, 50 Yonsei-ro, Seodaemun-gu, Seoul 03722, Republic of Korea}
\email{sungrakc@yonsei.ac.kr}

\author{Jinhyung Park}
\address{Department of Mathematics, Sogang University, 35 Baekbeom-ro, Mapo-gu, Seoul 04107, Republic of Korea}
\email{parkjh13@sogang.ac.kr}


\date{\today}
\keywords{Okounkov body, divisor, Iitaka dimension, volume, algebraic fiber space, Iitaka conjecture}

\begin{abstract}
The purpose of this paper is to establish a subadditivity theorem of Okounkov bodies for algebraic fiber spaces. As applications, we obtain a product formula of the restricted canonical volumes for algebraic fiber spaces and a sufficient condition for an algebraic fiber space to be birationally isotrivial in terms of Okounkov bodies when a general fiber is of general type. Furthermore, we also prove the subadditivity of the numerical Iitaka dimensions for algebraic fiber spaces, and this confirms some numerical variants of the Iitaka conjecture. We hope that our results would provide a new approach toward the Iitaka conjecture.
\end{abstract}

\maketitle

\section{Introduction}

Throughout the paper, we work over the field $\C$ of complex numbers. In this paper, every divisor is assumed to be a $\Q$-Cartier $\Q$-divisor unless otherwise stated, and an \emph{algebraic fiber space} is a surjective morphism $f \colon X\to Y$ between smooth projective varieties $X,Y$ with connected fibers. We denote by $F$ a general fiber of $f$. The intertwined connection among $X,Y,F$ is still mysterious and has yet to be studied further.

\medskip

An \emph{Okounkov body} of a divisor is a convex subset in Euclidean space introduced by Kaveh--Khovanskii \cite{KK} and Lazarsfeld--Musta\c{t}\u{a} \cite{lm-nobody} independently. It is well known that the Okounkov bodies encode various asymptotic invariants of divisors (see Section \ref{sec:prelim} for the relevant definitions). Our experience indicates that the Okounkov bodies are extremely useful in studying the subadditivity properties of the Iitaka dimensions and the volumes of divisors. The aim of this paper is to unveil some relations on the positivity properties of the canonical divisors $K_X,K_Y, K_F$ in a fixed framework provided by the Okounkov bodies.

\medskip

First of all, we recall one of the most important problems in birational geometry about algebraic fiber spaces, which still remains open since 1972 (see \cite{Fujino-book} for the comprehensive account for the Iitaka conjecture).

\medskip

\noindent\textbf{Iitaka Conjecture.}
\emph{Let $f\colon X\to Y$ be an algebraic fiber space with general fiber $F$. Then the following inequality for the Kodaira dimensions holds:}
$$
\kappa(K_X) \geq \kappa(K_Y)+\kappa(K_F).
$$

\medskip

Along with the Kodaira dimension, the volume function also provides a refined measurement of the positivity of canonical divisors. Regarding the relation on the volumes of canonical divisors $K_X, K_Y, K_F$, the following product formula of volumes was obtained by Kawamata \cite{K2} under the assumption that $X, Y, F$ are of general  type:
\begin{equation}\label{*}
\frac{\vol_X(K_X)}{\dim X!} \geq \frac{\vol_Y(K_Y)}{\dim Y!} \cdot \frac{\vol_F(K_F)}{\dim F!}.
\end{equation}
In \cite{CJPW}, the product formula (\ref{*}) was interpreted and reproved from an alternative perspective in terms of Okounkov bodies of canonical divisors. More precisely, the formula (\ref{*}) can be obtained as an immediate consequence of the following ``subadditivity'' of the Okounkov bodies of canonical divisors:
\begin{equation}\label{**}
\okbd_{X_\bullet}(K_X) \supseteq \okbd_{Y_\bullet}(K_Y) + \okbd_{F_\bullet}(K_F)
\end{equation}
where $X_\bullet$ is a \emph{fiber-type} admissible flag on $X$ associated to admissible flags $Y_\bullet, F_\bullet$ on $Y, F$, respectively with $F = f^{-1}(Y_{\dim Y})$, i.e.,
$$
X_i = \begin{cases} f^{-1}(Y_i) & \text{if $0 \leq i \leq \dim Y$} \\
F_{i-\dim Y} & \text{if $\dim Y < i \leq \dim X$}. \end{cases}
$$
By identifying $\Delta_{Y_\bullet}(K_Y)$ with $\Delta_{Y_\bullet}(K_Y)\times\{0\}^{\dim F}$ and  $\Delta_{F_\bullet}(K_F)$ with $\{0\}^{\dim Y}\times\Delta_{F_\bullet}(K_F)$ in an obvious way, we may regard $\okbd_{Y_\bullet}(K_Y), \okbd_{F_\bullet}(K_F) \subseteq  \R_{\geq 0}^{\dim X}$. Then ``$+$'' in (\ref{**}) is the Minkowski sum of convex bodies in $\R^{\dim X}$, and we will use this convention if no confusion is likely to occur.
As $K_X, K_Y, K_F$ are big by assumption, the Okounkov bodies
$$
\okbd_{X_\bullet}(K_X) \subseteq \R_{\geq 0}^{\dim X}, \okbd_{Y_\bullet}(K_Y) \subseteq \R_{\geq 0}^{\dim Y}, \okbd_{F_\bullet}(K_F) \subseteq \R_{\geq 0}^{\dim F}
$$
have full dimensions in the respective spaces. Thus their Euclidean volumes can be expressed as the volumes of divisors up to some constants depending on the dimensions of varieties (see \cite[Theorem A]{lm-nobody}). This leads to a new proof of Kawamata's product formula (\ref{*}) via (\ref{**}). It is worth recalling from \cite[Theorem 1.3]{CJPW} (see also \cite[Corollary 4.7]{T}) that the equality holds in (\ref{*}) (or equivalently, in (\ref{**})) if and only if $f$ is birationally isotrivial.

\medskip

The principal aim of this paper is to generalize the main results of \cite{CJPW}, and to provide a new approach toward the Iitaka conjecture. The following theorem illustrates how the Okounkov bodies can be used effectively in the study of Iitaka conjecture.

\begin{theorem}\label{thm:oklimK}
Let $f\colon X\to Y$ be an algebraic fiber space with general fiber $F$ over a point $\eta\in Y$. Assume that $K_Y$ and $K_F$ are pseudoeffective. Let $X_\bullet$ be a fiber-type admissible flag on $X$ associated to $Y_\bullet, F_\bullet$, where $Y_\bullet$ is an admissible flag on $Y$ centered at $\eta$ containing a positive volume subvariety $Y'$ of $K_Y$ and $F_\bullet$ is an admissible flag on $F$ containing a positive volume subvariety $F'$ of $K_F$. Then the following hold:

\smallskip

\noindent $(1)$ We have
$$
\oklim_{X_\bullet}(K_X) \supseteq \oklim_{Y_\bullet}(K_Y) + \oklim_{F_\bullet}(K_F).
$$
In particular,
$$
\nu_{\BDPP}(K_X) \geq \nu_{\BDPP}(K_Y) + \nu_{\BDPP}(K_F).
$$
\noindent $(2)$ Assume that $K_F$ is big.
If the equality $\nu_{\BDPP}(K_X) = \nu_{\BDPP}(K_Y) + \nu_{\BDPP}(K_F)$ holds, then the following canonical volume formula holds:
$$
\frac{\vol_{X|X'}^+(K_X)}{\nu_{\BDPP}(K_X)!}\geq\frac{\vol_{Y|Y'}^+(K_Y)}{\nu_{\BDPP}(K_Y)!}\cdot\frac{\vol_{F|F'}^+(K_F)}{\nu_{\BDPP}(K_F)!}
$$
where $X'=f^{-1}(Y')$. If the equality $\oklim_{X_\bullet}(K_X) = \oklim_{Y_\bullet}(K_Y) + \oklim_{F_\bullet}(K_F)$ holds, then $f$ is birationally isotrivial.

\end{theorem}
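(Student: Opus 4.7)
The plan is to deduce both assertions of Theorem \ref{thm:oklimK} from the big-divisor subadditivity (\ref{**}) of \cite{CJPW} by a perturbation argument, and then to read off the numerical and rigidity consequences from the shape of the limit Okounkov bodies exactly as (\ref{*}) was derived from (\ref{**}) in the big case.

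To prove (1), I would fix an ample $\Q$-divisor $A$ on $X$ with $A|_F$ ample, fix an ample $\Q$-divisor $A_Y$ on $Y$, and set $A_X := A + f^*A_Y$. For each rational $\varepsilon > 0$ the divisors $K_X + \varepsilon A_X$, $K_Y + \varepsilon A_Y$, and $K_F + \varepsilon A|_F$ are big, so (\ref{**}) applied to the fiber-type flag $X_\bullet$ gives
\[
\okbd_{X_\bullet}(K_X + \varepsilon A_X) \ \supseteq \ \okbd_{Y_\bullet}(K_Y + \varepsilon A_Y) + \okbd_{F_\bullet}(K_F + \varepsilon A|_F).
\]
Passing to the limit $\varepsilon \to 0$ using the description of $\oklim$ as a decreasing intersection of ordinary Okounkov bodies under small ample perturbation, together with the fact that Minkowski sums of compact convex sets commute with such decreasing intersections, yields the claimed inclusion. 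The inequality on $\nu_{\BDPP}$ follows by comparing dimensions of the three convex bodies; here the hypothesis on the positive-volume subvarieties $Y'$, $F'$ in the flags is crucial, as it guarantees $\dim \oklim_{Y_\bullet}(K_Y) = \nu_{\BDPP}(K_Y)$ and $\dim \oklim_{F_\bullet}(K_F) = \nu_{\BDPP}(K_F)$, and in turn (via $X' = f^{-1}(Y')$) the corresponding identity on $X$.

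For (2), under the numerical equality the two sides of the inclusion from (1) have the same dimension $d := \nu_{\BDPP}(K_X)$. The restricted augmented volume $\vol_{X|X'}^+(K_X)$ is, up to the factor $d!$, the Euclidean $d$-volume of the appropriate slice of $\oklim_{X_\bullet}(K_X)$ along the face cut out by $X'$ — a limit version of the slicing theorem of \cite{lm-nobody} — and analogously for $Y$ and $F$. A Brunn--Minkowski type inequality applied to the Minkowski-sum inclusion then yields the stated product inequality for restricted volumes, in complete parallel with the passage (\ref{**}) $\Rightarrow$ (\ref{*}) in the big case. If, in addition, $\oklim_{X_\bullet}(K_X) = \oklim_{Y_\bullet}(K_Y) + \oklim_{F_\bullet}(K_F)$, then by adding $\varepsilon A_X$ and approximating one is reduced to the equality case of (\ref{**}), to which the rigidity statement of \cite[Theorem 1.3]{CJPW} (see also \cite[Corollary 4.7]{T}) applies, forcing $f$ to be birationally isotrivial.

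The main obstacle I foresee lies in the perturbation-and-limit step on the fibration. One must establish that the Minkowski sum of the three decreasing families of Okounkov bodies, indexed by $\varepsilon$, commutes with the intersection defining $\oklim$; this is subtle because the individual Okounkov bodies need not stabilize as $\varepsilon \to 0$, so compactness and uniform continuity of the slicing along the flag have to be used carefully. A second delicate point is to check that the ``slice equals restricted augmented volume'' identity extends from big to pseudoeffective divisors in a manner compatible with the chosen positive-volume subvarieties $Y'$, $F'$, $X'$, so that the dimensional matching forced by the numerical equality in (2) really transfers to equality of the relevant Euclidean volumes of slices.
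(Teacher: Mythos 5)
Your outline for part (1) is close in spirit to what the paper does (perturb, apply the big-divisor subadditivity, pass to the limit), but as written it has a gap: formula (\ref{**}) is stated for the canonical divisors $K_X, K_Y, K_F$ themselves, and it rests on Viehweg's weak positivity of $f_*\mathcal{O}_X(mK_{X/Y})$. Once you perturb to $K_X + \varepsilon A_X$, you are no longer in that situation; what you actually need is the more general subadditivity result \cite[Theorem 1.1]{CJPW} (here upgraded to Theorem \ref{thm:main}) applied to $D = K_X$, $R = K_{X/Y} + \varepsilon A$, $D_Y = K_Y$, and this requires the \emph{twisted} weak positivity of $f_*\mathcal{O}_X(m(K_{X/Y} + \varepsilon A))$. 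That input is nontrivial and is supplied by the Dutta--Murayama theorem (Theorem \ref{thm:twistedweakpos}); the paper's proof hinges on it and you do not mention it. Also, the limit step you single out as the main obstacle is actually unproblematic in the direction needed: since $\oklim_{Y_\bullet}(K_Y) \subseteq \Delta_{Y_\bullet}(K_Y + \delta A_Y)$ and similarly for $F$, the Minkowski sum of the limits is contained in each perturbed body $\Delta_{X_\bullet}(K_X + \varepsilon A_X)$, and intersecting over $\varepsilon$ gives the claim with no commutation subtlety at all.

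For part (2), the first half is essentially right, though the mechanism is simpler than Brunn--Minkowski: under the numerical equality, $\oklim_{Y_\bullet}(K_Y)$ and $\oklim_{F_\bullet}(K_F)$ sit in complementary coordinate subspaces of $\R^{\nu_{\BDPP}(K_X)}$, so their Minkowski sum is a Cartesian product and its Euclidean volume is the product of the two volumes; one then invokes Theorem \ref{CHPWmain} (and \cite[Theorem 1.2]{CPW-okbd II} to guarantee $X_\bullet$ contains a positive volume subvariety) to translate each volume into an augmented restricted volume. However, the rigidity step is a genuine gap. Equality of the \emph{limiting} Okounkov bodies does not reduce by approximation to the equality case of (\ref{**}) for the $\varepsilon$-perturbed divisors: the perturbed bodies $\Delta_{X_\bullet}(K_X+\varepsilon A_X)$, $\Delta_{Y_\bullet}(K_Y+\varepsilon A_Y)$, $\Delta_{F_\bullet}(K_F+\varepsilon A|_F)$ are each strictly larger than the corresponding limits in general, and nothing forces the perturbed inclusion to become an equality (indeed, \cite[Theorem 1.3]{CJPW} would not even apply to the perturbed divisors, which are not canonical). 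The paper instead argues by contraposition using the Kawamata--Viehweg variation theorem (Theorem \ref{thm:var}): if $f$ is not birationally isotrivial then $\var(f) > 0$, so $\kappa(K_{X/Y}) \geq \kappa(K_F) + \var(f) > \kappa(K_F)$, hence $\dim\oklim_{X_\bullet}(K_{X/Y}) > \dim\oklim_{F_\bullet}(K_F)$, and the established inclusion $\oklim_{X_\bullet}(K_{X/Y}) \supseteq \oklim_{F_\bullet}(K_F)$ becomes strict, yielding strict inclusion after adding $\oklim_{Y_\bullet}(K_Y)$. Your proposal contains no substitute for this variation-theoretic input.
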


We refer to Section \ref{sec:prelim} for the definitions of positive volume subvariety, Nakayama subvariety, and the numerical Iitaka dimensions $\nu_{\BDPP}(D),  \kappa_{\sigma}(D)$ of a pseudoeffective divisor $D$.
The \emph{limiting Okounkov body} $\okbd_{X_\bullet}(D)$ of a pseudoeffective divisor $D$ was introduced and studied in \cite{CHPW-okbd I}. Recall that $\dim \oklim_{X_\bullet}(D) \leq \nu_{\BDPP}(D)$ in general.
If $X_\bullet$ contains a positive volume subvariety of $D$, then the equality holds and  the Euclidean volume of $\oklim_{X_\bullet} (D)$ in $\R^{\nu_{\BDPP}(D)}$ coincides with $\vol_{X|X'}^+(D)/\nu_{\BDPP}(D)!$.

\medskip

In contrast to \cite[Theorem 1.3]{CJPW}, the birational isotriviality of $f$ does not necessarily imply
the equality $\nu_{\BDPP}(K_X) = \nu_{\BDPP}(K_Y) + \nu_{\BDPP}(K_F)$ in Theorem \ref{thm:oklimK} (1). This means that the inclusion
$\oklim_{X_\bullet}(K_X) \supseteq \oklim_{Y_\bullet}(K_Y) + \oklim_{F_\bullet}(K_F)$ in Theorem \ref{thm:oklimK} (1) can be strict even when $f$ is birationally isotrivial and $K_F$ is big. See Example \ref{ex:isotrivialnotequal}.

\medskip

In the setting of Theorem \ref{thm:oklimK}, Nakayama \cite[Theorem V.4.1]{nakayama} and Fujino \cite[Theorem 2.1]{Fujino2} (see also the discussion in \cite[Section 3]{Fujino2}) proved the following inequalities
$$
\kappa_{\sigma}(K_X) \geq \kappa_{\sigma}(K_Y) + \kappa(K_F)~~\text{and}~~\kappa_{\sigma}(K_X) \geq \kappa(K_Y) + \kappa_{\sigma}(K_F).
$$
However, we still do not know whether the following inequality holds:
$$
\kappa_{\sigma}(K_X) \geq \kappa_{\sigma}(K_Y) + \kappa_{\sigma}(K_F).
$$
We remark that $\kappa_\sigma(D)\geq\nu_{\BDPP}(D)$ holds for every pseudoeffective divisor $D$ and the inequality can be strict for some $\R$-divisor $D$ (see \cite[Theorem 1.2]{CP}).
Notice that  $\nu_{\BDPP}(D)$ is the smallest one among the mostly used numerical Iitaka dimensions in the literature (see e.g., \cite{BDPP}, \cite{CP}, \cite{E}, \cite{lehmann-nu}, \cite{nakayama}) while $\nu_{\BDPP}(D)\geq \kappa(D)$. Thus the inequality in Theorem \ref{thm:oklimK} is in some sense closer to what is expected by the  Iitaka conjecture than the previously known results. Note also that through Theorem \ref{thm:oklimK}, we can also confirm that the \emph{abundance conjecture} \cite[Conjecture 3.8]{BDPP}, which predicts that $\nu_{\BDPP}(K_X)=\kappa(K_X)$ for every smooth projective variety $X$, implies the Iitaka conjecture. See Remark \ref{rem:anothernumIitaka} for another numerical variant of the Iitaka conjecture.

\medskip

The \emph{valuative Okounkov body} $\okval_{X_\bullet}(D)$ of an effective divisor $D$ is the Okounkov body constructed in the usual way without taking the limit process as we did for $\oklim_{X_\bullet}(D)$. It is introduced and studied in \cite{CHPW-okbd I}. It is known that $\dim \okval_{X_\bullet}(D) = \kappa(D)$ and if $X_\bullet$ contains a Nakayama subvariety $X'$ of $D$ and $X_{\dim X}$ is a general point, then the Euclidean volume of $\okval_{X_\bullet}(D)$ in $\R^{\kappa(D)}$ is $\vol_{X|X'}(D)/\kappa(D)!$. Theorem \ref{thm:oklimK} naturally leads us to ask the following intriguing question:

\medskip

\noindent \textbf{Question.}
\emph{Let $f\colon X\to Y$ be an algebraic fiber space with general fiber $F$ over a point $\eta\in Y$. Assume that $K_Y$ and $K_F$ are effective. Let $X_\bullet$ be a fiber-type admissible flag on $X$ associated to $Y_\bullet, F_\bullet$, where $Y_\bullet$ is an admissible flag on $Y$ centered at $\eta$ containing a Nakayama subvariety $Y'$ of $K_Y$ and $F_\bullet$ is an admissible flag on $F$ centered at a general point of $F$ containing a Nakayama subvariety $F'$ of $K_F$.
Then does the following inclusion
$$
\okval_{X_\bullet}(K_X) \supseteq \okval_{Y_\bullet}(K_Y) + \okval_{F_\bullet}(K_F)
$$
hold?}

\medskip

An affirmative answer to this question would solve the Iitaka conjecture because the dimensions of the valuative Okounkov bodies $\okval_{X_\bullet}(K_X), \okval_{Y_\bullet}(K_Y), \okval_{F_\bullet}(K_F)$ coincide with the Kodaira dimensions $\kappa(K_X),\kappa(K_Y),\kappa(K_F)$, respectively. However, this na\"{i}ve expectation fails to hold due to Example \ref{ex:inverseinclusion} for which the following reverse strict inclusion holds:
$$
\okval_{X_\bullet}(K_X) \subsetneq \okval_{Y_\bullet}(K_Y) + \okval_{F_\bullet}(K_F).
$$
We note that $K_F$ is big in this example. However, when $K_Y$ is big, we do have an affirmative answer to the question.

\begin{theorem}\label{thm:okvalK}
Let $f\colon X\to Y$ be an algebraic fiber space with general fiber $F$ over a point $\eta\in Y$. Assume that $K_Y$ is big and $K_F$ is effective. Let $X_\bullet$ be a fiber-type admissible flag on $X$ associated to $Y_\bullet, F_\bullet$, where $Y_\bullet$ is an admissible flag on $Y$ centered at $\eta$ containing a Nakayama subvariety $Y'$ of $K_Y$ and $F_\bullet$ is an admissible flag on $F$ centered at a general point of $F$ containing a Nakayama subvariety $F'$ of $K_F$. Then we have
$$
\okval_{X_\bullet}(K_X) \supseteq \okval_{Y_\bullet}(K_Y) + \okval_{F_\bullet}(K_F).
$$
In particular,
$$
\kappa(K_X) =\kappa(K_Y) + \kappa(K_F).
$$
\end{theorem}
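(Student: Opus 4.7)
The plan is to show directly that every point in the Minkowski sum $\okval_{Y_\bullet}(K_Y) + \okval_{F_\bullet}(K_F) \subseteq \R^{\dim X}$ lies in $\okval_{X_\bullet}(K_X)$. Since each valuative Okounkov body is the closure of the set of normalized valuation vectors of nonzero global sections, it suffices to realize, as a limit of the form $\nu_{X_\bullet}(u_\alpha)/N_\alpha$ with $u_\alpha \in H^0(X,N_\alpha K_X)$, every rational point $\tfrac{1}{m}\nu_{Y_\bullet}(t) + \tfrac{1}{m}\nu_{F_\bullet}(s)$ for nonzero $t\in H^0(Y,mK_Y)$ and $s\in H^0(F,mK_F)$ (after passing to a common multiple $m$).

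The construction combines Viehweg's weak positivity of the direct image $f_*\mc{O}_X(mK_{X/Y})$ with the bigness of $K_Y$. Fix an ample divisor $A$ on $Y$. Weak positivity produces, for each sufficiently large $\alpha$, an integer $\beta=\beta(\alpha)$ and a section $\tilde{s}_\alpha\in H^0\bigl(X,\,\alpha\beta m K_{X/Y} + \beta f^*A\bigr)$ whose restriction to $F$ is a nonzero scalar multiple of $s^{\alpha\beta}$. Bigness of $K_Y$ yields a positive integer $k$ and a nonzero section $\tau\in H^0(Y,kK_Y-A)$. Setting
\[
u_\alpha \;:=\; \tilde{s}_\alpha\cdot f^*\bigl(\tau^\beta\cdot t^{a_\alpha}\bigr),\qquad a_\alpha\;:=\;\tfrac{\beta(\alpha m-k)}{m},
\]
(with divisibilities arranged by passing to a common multiple of $m,\beta,k$), the factor $f^*\tau^\beta$ cancels the twist $\beta f^*A$ while contributing $\beta k\,f^*K_Y$, and together with $f^*t^{a_\alpha}$ the pullback from $Y$ converts $\alpha\beta m\,K_{X/Y}$ exactly into $\alpha\beta m\,K_X$. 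Hence $u_\alpha\in H^0(X,\alpha\beta m K_X)$.

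The valuation computation is then clean, thanks to a key observation: for the fiber-type flag, one has $F\subseteq X_i = f^{-1}(Y_i)$ for every $i\leq\dim Y$, so the nonvanishing $\tilde{s}_\alpha|_F\neq 0$ forces $\ord_{X_i}(\tilde{s}_\alpha)=0$ for all such $i$. Consequently the lex-valuation procedure restricts $\tilde{s}_\alpha$ faithfully to $F$ and the first $\dim Y$ coordinates of $\nu_{X_\bullet}(\tilde{s}_\alpha)$ vanish. By additivity of valuation under products,
\[
\nu_{X_\bullet}(u_\alpha) \;=\; \bigl(a_\alpha\nu_{Y_\bullet}(t)+\beta\nu_{Y_\bullet}(\tau),\;\alpha\beta\nu_{F_\bullet}(s)\bigr).
\]
Dividing by $N_\alpha=\alpha\beta m$ and sending $\alpha\to\infty$, the first block converges to $\tfrac{1}{m}\nu_{Y_\bullet}(t)$ (since $a_\alpha/N_\alpha\to 1/m$ and $\beta/N_\alpha=1/(\alpha m)\to 0$), and the second equals $\tfrac{1}{m}\nu_{F_\bullet}(s)$ identically, producing the desired limit point in $\okval_{X_\bullet}(K_X)$.

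I expect the main technical subtlety to be the bookkeeping for integrality of the parameters $\alpha,\beta,k,a_\alpha$ coming from Viehweg's weak positivity and from the auxiliary section $\tau$; this is handled by replacing everything with suitable common multiples, which does not affect the limit. For the ``In particular'' statement, the inclusion gives $\kappa(K_X)=\dim\okval_{X_\bullet}(K_X)\geq \dim\okval_{Y_\bullet}(K_Y)+\dim\okval_{F_\bullet}(K_F) = \kappa(K_Y)+\kappa(K_F)$, since the two summand bodies sit in orthogonal coordinate subspaces under the natural embedding; the reverse inequality $\kappa(K_X)\leq \dim Y+\kappa(K_F)=\kappa(K_Y)+\kappa(K_F)$ follows from easy Iitaka addition combined with the bigness of $K_Y$.
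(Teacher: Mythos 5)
Your argument takes a genuinely different route from the paper. The paper reduces to Lemma~\ref{lem:maintechnical}, a restricted-volume equality $\vol_{X|N}(D)=\vol_{F|N}(R|_F)$ established by Fujita approximation, and pipes this through the abstract structure theory of Okounkov bodies (Theorem~\ref{CHPWmain}), together with the usual subadditivity and the identification $\okval_{X_\bullet}(f^*D_Y)=\okval_{Y_\bullet}(D_Y)\times\{0\}^{\dim F}$. You instead realize each candidate point of the Minkowski sum directly as a limit of normalized valuation vectors of explicitly built sections of multiples of $K_X$, using additivity of $\nu_{X_\bullet}$ on products of sections and the observation that $\tilde s_\alpha|_F\neq 0$ forces the first $\dim Y$ coordinates of $\nu_{X_\bullet}(\tilde s_\alpha)$ to vanish for the fiber-type flag. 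The two core inputs --- Viehweg's weak positivity of $f_*\mathcal O_X(mK_{X/Y})$, and bigness of $K_Y$ to absorb the ample twist $\beta f^*A$ via a section $\tau$ of $kK_Y-A$ --- are the same in both proofs, but your packaging is more constructive and avoids Fujita approximation and the restricted-volume characterization entirely. This is a legitimate alternative in spirit, and your closing argument (convexity and closedness of $\okval_{X_\bullet}(K_X)$, plus the easy addition inequality $\kappa(K_X)\le\dim Y+\kappa(K_F)$) is exactly right.

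There is, however, one substantive gap. You assert that weak positivity produces $\tilde s_\alpha\in H^0\bigl(X,\alpha\beta m K_{X/Y}+\beta f^*A\bigr)$ restricting to $s^{\alpha\beta}$ on $F$. What weak positivity actually gives is generic global generation of $\hat{S}^{\alpha\beta}\bigl(f_*\mathcal O_X(mK_{X/Y})\bigr)\otimes\mathcal O_Y(\beta A)$, hence a global section $\sigma$ of this sheaf with the right value at $\eta$. The multiplication map sends $\sigma$ into $H^0\bigl(Y,\bigl(f_*\mathcal O_X(\alpha\beta m K_{X/Y})\bigr)^{\vee\vee}\otimes\mathcal O_Y(\beta A)\bigr)$, and a global section of the reflexive hull need not lie in $H^0\bigl(X,\alpha\beta m K_{X/Y}+\beta f^*A\bigr)$: the two sheaves agree outside a codimension-$\ge 2$ subset $Z\subseteq Y$, but $f^{-1}(Z)$ can contain a divisor of $X$, so your section a priori lives only on $X\setminus f^{-1}(Z)$ and may not extend. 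The paper handles precisely this by passing to a Viehweg model $X'\to Y'$ via \cite[Lemma 7.3]{V1}, in which every divisor contracted to codimension $\ge 2$ is $\tau'$-exceptional, and then invoking \cite[Lemma 2.7]{CJPW} to identify the reflexive hull as a twist by an exceptional divisor whose pushforward restores the correct space of sections. Your proof needs the same maneuver (or an equivalent one); the integrality bookkeeping you flag as the main subtlety is indeed routine, but this reflexivity point is the one that requires an actual geometric input.
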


Recall that the Iitaka conjecture for the case where $Y$ is of general type has been settled previously by Viehweg \cite[Corollary IV]{V1}. Note that the first statement in Theorem \ref{thm:okvalK} only implies that $\kappa(K_X) \geq \kappa(K_Y) + \kappa(K_F)$, but then the easy addition lemma shows the desired equality.
When $K_F$ is also big in Theorem \ref{thm:okvalK}, the similar statements to Theorem \ref{thm:oklimK} (2) hold, but this case is already treated in \cite{CJPW}.

\medskip

Both Theorems \ref{thm:oklimK} and \ref{thm:okvalK} easily follow from a more general statement about the subadditivity of Okounkov bodies of divisors on algebraic fiber spaces stated below. The following subadditivity theorem is a generalization of \cite[Theorem 1.1]{CJPW}.

\begin{theorem}\label{thm:main}
Let $f\colon X\to Y$ be an algebraic fiber space with general fiber $F$ over a point $\eta \in Y$. Let $D,R$ be divisors on $X$, and $D_Y, A_Y$ be divisors on $Y$ satisfying the following conditions:
\begin{enumerate}[$(1)$]
 \item $D \sim_{\Q} f^*D_Y + R$.
 \item $f_*\mathcal{O}_X(mR)$ is weakly positive for every sufficiently divisible integer $m > 0$.
 \item $R|_F$ is effective.
 \item $D_Y$ is effective.
 \item $A_Y$ is ample.
\end{enumerate}
Let $X_\bullet$ be a fiber-type admissible flag on $X$ associated to $Y_\bullet, F_\bullet$, where $Y_\bullet$ is an admissible flag on $Y$ centered at $\eta$ containing a Nakayama subvariety of $D_Y$, and $F_\bullet$ is an admissible flag on $F$ centered at a general point of $F$ containing a Nakayama subvariety of $R|_F$. Then we have
$$
\okval_{X_\bullet}(D+f^*A_Y) \supseteq \okval_{Y_\bullet}(D_Y) + \okval_{F_\bullet}(R|_F).
$$
\end{theorem}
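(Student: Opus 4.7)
The approach generalizes the strategy of \cite[Theorem 1.1]{CJPW} from the big case, with Viehweg's weak positivity hypothesis (2), amplified by the ample twist $f^{*}A_Y$, playing the role that bigness played previously. Since $\okval_{X_\bullet}(D + f^{*}A_Y)$ is the closure of the convex hull of normalized valuation vectors of nonzero sections, it suffices to prove that for each sufficiently divisible $m$ and each pair $s_Y \in H^0(Y, mD_Y)$ (which exists by (4)) and $s_F \in H^0(F, mR|_F)$ (which exists by (3)), the vector $\tfrac{1}{m}(\nu_{Y_\bullet}(s_Y),\nu_{F_\bullet}(s_F)) \in \R^{\dim X}$ lies in $\okval_{X_\bullet}(D + f^{*}A_Y)$; by density of such rational points, this will imply the inclusion.

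The heart of the construction is to lift $s_F$ globally using weak positivity. Applying (2) with the ample $A_Y$ (by (5)), there exists $\beta \gg 0$ such that $\hat{S}^{\beta}(f_{*}\mathcal{O}_X(mR)) \otimes \mathcal{O}_Y(\beta A_Y)$ is globally generated at the general point $\eta$. Combining this with the natural map $\hat{S}^{\beta}(f_{*}\mathcal{O}_X(mR)) \to f_{*}\mathcal{O}_X(\beta m R)$, which is an isomorphism over the locally free locus (and in particular at $\eta$), yields a section $s \in H^0(X,\, \beta m R + \beta f^{*}A_Y)$ whose restriction to $F$ is a nonzero scalar multiple of $s_F^{\beta}$. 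Using hypothesis (1) and the $\Q$-linear equivalence $D \sim_{\Q} f^{*}D_Y + R$, I then form
$$
\sigma := f^{*}(s_Y^{\beta}) \cdot s \cdot f^{*}(t) \;\in\; H^0\bigl(X,\,\beta m (D + f^{*}A_Y)\bigr),
$$
where $t \in H^0(Y,(\beta m - \beta)A_Y)$ is a generic section, available by ampleness of $A_Y$, chosen not to vanish along any subvariety of the flag $Y_\bullet$. The valuation $\nu_{X_\bullet}(\sigma)$ is the sum of the contributions from the three factors: $f^{*}(s_Y^{\beta})$ contributes $\beta(\nu_{Y_\bullet}(s_Y),\vec{0})$ (the zero block in the $F$-coordinates coming from normalizing $s_Y$ not to vanish at the center $Y_{\dim Y}$); $f^{*}(t)$ contributes zero; and $s$ contributes $\beta\,\nu_{F_\bullet}(s_F)$ in the $F$-subflag coordinates since $s|_F \propto s_F^{\beta}$. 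Dividing $\nu_{X_\bullet}(\sigma)$ by $\beta m$ recovers the target point.

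The principal obstacle, and the main deviation from \cite{CJPW}, is that weak positivity regulates only the restriction $s|_F$ but not the vanishing of $s$ along the codimension-one preimages $f^{-1}(Y_1), \dots, f^{-1}(Y_{\dim Y})$. The Nakayama-subvariety hypotheses on both flags are crucial here: they constrain $\okval_{Y_\bullet}(D_Y)$ and $\okval_{F_\bullet}(R|_F)$ to lie in coordinate subspaces dictated by the subvarieties $Y'$ and $F'$, and permit one to select the lift $s$ (either by averaging over the affine space of lifts mapping to $s_F^{\beta}$, or by modifying $s$ by sections vanishing on $F$) so that its valuations in the $Y$-coordinates of $X_\bullet$ vanish or can be absorbed into the pullback term $f^{*}(s_Y^{\beta})$. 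Verifying this compatibility carefully, and taking a limit as $m \to \infty$ to cover all of $\okval_{Y_\bullet}(D_Y) + \okval_{F_\bullet}(R|_F)$, will constitute the bulk of the technical work.
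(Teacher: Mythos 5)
Your proposal takes a genuinely different route from the paper, and the route as sketched has a gap at precisely the place you flag as ``the bulk of the technical work.'' The paper does not argue section-by-section. After splitting $D + f^*A_Y \sim_{\Q} f^*D_Y + (R + f^*A_Y)$ and using the ordinary subadditivity $\okval_{X_\bullet}(D+f^*A_Y) \supseteq \okval_{X_\bullet}(f^*D_Y) + \okval_{X_\bullet}(R+f^*A_Y)$, the paper reduces to showing $\okval_{F_\bullet}(R|_F) \subseteq \okval_{X_\bullet}(R+f^*A_Y)$, and proves this by a volume sandwich: for the graded linear series $W_\bullet$ on the Nakayama subvariety $N$ of $R|_F$ given by restricting from $X$, one has the chain $\Delta_{N_\bullet}(W_\bullet) \subseteq \okval_{X_\bullet}(R+f^*A_Y)_{x_1=\cdots=x_{\dim Y}=0} \subseteq \okval_{F_\bullet}(R|_F)$, and the key Lemma (proved via Viehweg's flattening from \cite[Lemma 7.3]{V1}, generic global generation from weak positivity, and Fujita approximation for graded linear series) gives $\vol_{X|N}(R+f^*A_Y) = \vol_{F|N}(R|_F)$, forcing all three convex bodies to coincide. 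The argument never needs to exhibit a single global section with a prescribed valuation vector.

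The gap in your proposal is the step asserting that $s$ ``contributes $\beta\,\nu_{F_\bullet}(s_F)$ in the $F$-subflag coordinates since $s|_F \propto s_F^\beta$.'' The $F$-block of $\nu_{X_\bullet}(s)$ equals $\nu_{F_\bullet}(s|_F)$ only if $s$ does not vanish along $f^{-1}(Y_1), \dots, f^{-1}(Y_{\dim Y - 1})$; otherwise the successive subtract-and-restrict steps in the valuation process alter the section that eventually reaches $F$, and the $F$-block is no longer $\nu_{F_\bullet}(s_F^\beta)$. Nothing in the weak positivity statement controls these vertical vanishing orders of the lift, and the remedies you offer are not workable as stated: averaging over the affine space $s + H^0(X, \beta m R + \beta f^*A_Y - F)$ of lifts does not obviously decrease $\ord_{f^{-1}(Y_i)}$, and $f^{-1}(Y_i)$ could lie in the stable base locus of $\beta m R + \beta f^*A_Y$ for all $\beta, m$, in which case no modification helps. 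The Nakayama hypotheses on the flags, which you invoke as the key to fixing this, do not speak to the lift's vertical vanishing at all; in the paper's proof those hypotheses are used only to invoke Theorem \ref{CHPWmain} and the volume computation of \cite[Remark 2.8 and Theorem 2.13]{lm-nobody}. Concretely: to make your approach rigorous you would need to prove that the bottom slice of $\okval_{X_\bullet}(R+f^*A_Y)$ fills all of $\okval_{F_\bullet}(R|_F)$, which is exactly what the paper's volume argument establishes and what your construction of a single section $\sigma$ does not.
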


As consequences of Theorem \ref{thm:main}, we obtain two Corollaries \ref{cor:oklim} and \ref{cor:okval}, which in turn imply Theorems \ref{thm:oklimK} and \ref{thm:okvalK} by the weak positivity theorem of Viehweg \cite[Theorem III]{V1} and its variants. See Section \ref{sec:prelim} for the definition of weakly positive sheaves.

\medskip

The rest of the paper is organized as follows. We begin in Section \ref{sec:prelim} with recalling basic definitions and facts. Section \ref{sec:main} is devoted to the proofs of the theorems stated in the introduction. In Section \ref{sec:examples}, we present some relevant examples, and we formulate a refinement of the question in the introduction.


\section{Preliminaries}\label{sec:prelim}

\subsection{Algebraic fiber spaces and weak positivity}
Let $f \colon X \to Y$ be an algebraic fiber space with general fiber $F$. Recall that the \emph{variation $\var(f)$ of $f$} is defined as the minimum of the transcendental degrees $\text{tr.}\deg_\C L$ of algebraically closed subfields $L\subseteq\overline{\C(Y)}$ such that $F\times_{\Spec (L)} \Spec (\overline{\C(Y)})$ is birationally equivalent to $X\times_{Y} \Spec (\overline{\C(Y)})$ for some smooth projective variety $F$ over $L$ (see \cite{V1}). We say that $f$ is \emph{birationally isotrivial} if  there exists a generically finite cover $\tau \colon Y' \to Y$ such that the fiber product $X \times_Y Y'$ is birationally equivalent to $F\times Y'$. It is well known that $f$ is birationally isotrivial if and only if $\var(f)=0$.

\begin{theorem}[{\cite[Theorem 1.1]{K1}, \cite[Theorem 1.20]{V2}; see also \cite[Theorem 3.4.7]{Fujino-book}}]\label{thm:var}
Let $f \colon X \to Y$ be an algebraic fiber space with general fiber $F$. Suppose that $F$ is a variety of general type. If $D_Y$ is any divisor on $Y$ with $\kappa(D_Y) \geq 0$, then we have
$$
\kappa(K_{X/Y} + f^*D_Y ) \geq \kappa(K_F) + \max\{ \kappa(D_Y), \var(f) \}.
$$
\end{theorem}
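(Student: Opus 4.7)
The plan is to follow the Kawamata--Viehweg approach, whose engine is Viehweg's weak positivity theorem for the relative pluricanonical sheaves $f_*\mathcal{O}_X(mK_{X/Y})$ together with his fiber product trick. The argument cleanly separates the fiberwise contribution $\kappa(K_F)$ from the base contribution $\max\{\kappa(D_Y),\var(f)\}$, and then combines them by pulling sections from $Y$ back to $X$.

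First, I would fix a sufficiently divisible integer $m>0$ so that $|mK_F|$ induces a birational embedding of $F$; such an $m$ exists because $F$ is of general type. Viehweg's theorem then guarantees that $f_*\mathcal{O}_X(mK_{X/Y})$ is weakly positive on $Y$, and its generic rank equals $h^0(F,mK_F)$, which grows asymptotically as $\vol(K_F)\,m^{\dim F}/(\dim F)!$. This is the source of the $\kappa(K_F)$ summand in the final inequality.

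Second, to inject the variation into the positivity statement, I would invoke the fiber-product trick: on a smooth model of the $s$-fold fiber product $X\times_Y\cdots\times_Y X$ with $s\gg 0$, weak positivity propagates to $\bigotimes^s f_*\mathcal{O}_X(mK_{X/Y})$. Combined with Kollár's characterization of $\var(f)$ via the variation of the moduli map, this produces, after a generically finite base change, an ample line bundle on a subvariety of dimension $\var(f)$ embedded in a symmetric power of $f_*\mathcal{O}_X(mK_{X/Y})$. If instead $\kappa(D_Y)\geq\var(f)$, the effectivity of $D_Y$ alone supplies the base contribution. In either case, after twisting by $f^*D_Y$ and passing to symmetric powers, one obtains a sheaf on $Y$ whose global sections grow in $k$ with exponent $\max\{\kappa(D_Y),\var(f)\}$. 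Pulling these sections back to $X$ through the adjunction morphism
$$
f^*f_*\mathcal{O}_X\bigl(km(K_{X/Y}+f^*D_Y)\bigr)\longrightarrow \mathcal{O}_X\bigl(km(K_{X/Y}+f^*D_Y)\bigr),
$$
and multiplying the base-direction growth by the fiber-direction growth from Step~1, one reads off $\kappa(K_{X/Y}+f^*D_Y)\geq\kappa(K_F)+\max\{\kappa(D_Y),\var(f)\}$.

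The principal obstacle is the bookkeeping in the variation step: converting the weak positivity of $f_*\mathcal{O}_X(mK_{X/Y})$ into the bigness of an associated sheaf on a subvariety of dimension exactly $\var(f)$ requires Viehweg's moduli-type argument, which in turn relies on semistable reduction, the $s$-fold fiber-product construction, and careful compatibilities of direct images under generically finite base change. Matching the constants so that the asymptotic growth in $k$ has exponent precisely $\max\{\kappa(D_Y),\var(f)\}$, rather than some smaller number, is where the bulk of the original arguments of Kawamata and Viehweg is concentrated; I would therefore import this step as a black box from \cite{K1,V2,Fujino-book} and focus my own verification on the compatibility of the weak positivity statements with the twisting by $f^*D_Y$.
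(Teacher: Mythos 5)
The paper does not prove this statement at all: it is quoted as a known theorem of Kawamata and Viehweg (\cite[Theorem 1.1]{K1}, \cite[Theorem 1.20]{V2}, see also \cite[Theorem 3.4.7]{Fujino-book}) and is used only as an input to the proof of Theorem 1.1(2). So there is no internal argument to compare yours against; the only meaningful comparison is with the cited proofs themselves.

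Measured against those, your outline names the right architecture --- weak positivity of $f_*\mathcal{O}_X(mK_{X/Y})$ for the fibre contribution, the $s$-fold fibre-product trick with semistable reduction and generically finite base change for the base contribution, and multiplication of sections through $f^*f_*\mathcal{O}_X(km(K_{X/Y}+f^*D_Y))\to\mathcal{O}_X(km(K_{X/Y}+f^*D_Y))$ --- but as a proof it has a genuine gap: the step you explicitly import as a black box (converting $\var(f)$ into bigness of an associated sheaf over a $\var(f)$-dimensional base inside symmetric or tensor powers of $f_*\mathcal{O}_X(mK_{X/Y})$, and pinning the growth exponent to exactly $\kappa(K_F)+\max\{\kappa(D_Y),\var(f)\}$) is not auxiliary bookkeeping around the theorem; it \emph{is} the theorem, and it constitutes essentially the whole of Kawamata's and Viehweg's arguments. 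Two smaller imprecisions: $\kappa(D_Y)\geq 0$ only gives effectivity of some multiple of $D_Y$, and even the case $\kappa(D_Y)\geq\var(f)$ is not supplied by ``effectivity alone'' --- one still needs the weak-positivity mechanism to add the $\kappa(D_Y)$ base directions to the $\kappa(K_F)$ fibre directions; also, the moduli-theoretic characterization of $\var(f)$ used here is due to Viehweg and Kawamata rather than Koll\'ar. Since the paper itself treats the statement as a citation, deferring to \cite{K1}, \cite{V2}, \cite{Fujino-book} is a legitimate way to use it in this context, but your text should then be framed as a citation accompanied by a roadmap, not as an independent proof.
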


The \emph{Iitaka dimension} of a divisor $D$ on a smooth projective variety is defined as
$$
\kappa(D):= \max\left\{k\in\mathbb Z_{\geq0}\left|\limsup\limits_{m\to\infty}\frac{h^0(X,\mathcal O_X(\lfloor mD\rfloor))}{m^k}>0\right.\right\}
$$
if $h^0(X, \mathcal{O}_X(\lfloor mD \rfloor)) > 0$ for some integer $m>0$ and $\kappa(D):=-\infty$ otherwise.

\medskip

Let $\mathcal{F}$ be a coherent torsion-free sheaf on a smooth projective variety $X$.
We say that $\mathcal{F}$ is \emph{weakly positive} if there exists a nonempty open subvariety $U \subseteq X$ such that for every ample Cartier divisor $H$ on $X$ and every integer $m>0$,  the map
$$
H^0(X,\hat{S}^{mk}(\mathcal{F})\otimes \mathcal{O}_X(kH))\otimes \mathcal{O}_X\to \hat{S}^{mk}(\mathcal{F})\otimes \mathcal{O}_X(kH)
$$
is surjective at each point in $U$ for some integer $k > 0$, where $\hat{S}^{mk}(\mathcal{F}):=\big(S^{mk}(\mathcal{F})\big)^{\vee\vee}$ is the double dual of the sheaf $S^{mk}(\mathcal{F})$.

\begin{theorem}[{\cite[Theorem D]{DM}, \cite[Theorem 3.4.7]{Fujino-book}}]\label{thm:twistedweakpos}
Let $f \colon X \to Y$ be an algebraic fiber space with general fiber $F$, and $\Delta$ be an effective divisor on $X$ such that $(X, \Delta)$ is an lc pair and $K_X +\Delta$ is $\Q$-Cartier $\Q$-divisor. Then $f_*\mathcal{O}_X(k(K_{X/Y} + \Delta))$ is weakly positive for every positive integer $k$ such that $k(K_X+\Delta)$ is $\Q$-linearly equivalent to a Cartier divisor.
\end{theorem}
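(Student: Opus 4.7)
The plan is to deduce the theorem from Viehweg's classical weak positivity of $f_*\omega_{X/Y}^{\otimes m}$ via a cyclic covering trick, with the genuine log canonical (as opposed to klt) case handled by Hodge-theoretic semipositivity.

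First I would reduce to a log smooth setting by taking a log resolution $\mu\colon X' \to X$ of $(X,\Delta)$ and writing $K_{X'} + \Delta' = \mu^*(K_X+\Delta) + E$ with $\Delta'$ a simple normal crossings $\Q$-divisor and $E\geq 0$ supported on the exceptional locus of $\mu$. Since weak positivity is a birational notion (it is tested on a nonempty open subset and is preserved under birational pushforward) and $\mu_*\mathcal{O}_{X'}(k(K_{X'/Y}+\Delta'))$ recovers $\mathcal{O}_X(k(K_{X/Y}+\Delta))$ in the appropriate sense, I may assume $(X,\Delta)$ is log smooth. Then, choosing a general member $H \in |k(K_X+\Delta) + f^*(\text{ample})|$ (the ample twist pulled back from $Y$ can be absorbed at the end, since weak positivity is stable under tensoring by pullbacks of ample divisors), I would form the degree-$k$ cyclic cover $\pi\colon \tilde X \to X$ branched along $H$, resolve it to a smooth model $\tilde X'$, and obtain a natural inclusion
\[
\mathcal{O}_X(k(K_{X/Y}+\Delta)) \hookrightarrow \pi_*\omega_{\tilde X/Y}
\]
realizing the twisted pluri-log-canonical sheaf as a direct summand of the pushed-forward relative dualizing sheaf of the cover. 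Applying Viehweg's classical weak positivity theorem to $(f\circ\pi)_*\omega_{\tilde X'/Y}$ and using that direct summands of weakly positive sheaves are weakly positive would finish the argument when $(X,\Delta)$ is klt.

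The main obstacle is precisely the genuine lc case: when $\lfloor\Delta\rfloor$ contains components of coefficient one, the cyclic cover above becomes non-normal along the lc centers, and the splitting of the pushed-forward sheaf into a direct summand fails set-theoretically. To circumvent this I would appeal to the Hodge module machinery systematized in Fujino's book \cite{Fujino-book} (building on Saito and Koll\'ar): one realizes $\mathcal{O}_X(k(K_{X/Y}+\Delta))$ as the lowest piece of the Hodge filtration of a polarizable variation of mixed Hodge structure associated to a semistable normal crossings model of $(X,\Delta)$ over $Y$, and the Fujita--Kawamata type semipositivity of such lowest Hodge pieces provides exactly weak positivity in Viehweg's sense. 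Combining this Hodge-theoretic semipositivity with the covering construction above yields the theorem uniformly for every positive integer $k$ with $k(K_X+\Delta)$ $\Q$-linearly equivalent to a Cartier divisor.
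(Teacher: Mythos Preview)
The paper does not prove this theorem at all: it is stated in the Preliminaries section as a quoted result from Dutta--Murayama \cite{DM} and Fujino \cite{Fujino-book}, and is then used as a black box in the proof of Theorem~\ref{thm:oklimK}. There is therefore no ``paper's own proof'' to compare against.

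That said, your sketch is a reasonable outline of the strategy in the cited sources, with a few imprecisions worth flagging. The reduction to log smooth and the cyclic-cover/eigensheaf idea for the klt case is indeed the classical Viehweg approach, but the displayed inclusion $\mathcal{O}_X(k(K_{X/Y}+\Delta)) \hookrightarrow \pi_*\omega_{\tilde X/Y}$ is not quite the right object: what one actually obtains after the cover and resolution is an eigensheaf decomposition of $(f\circ\pi)_*\omega_{\tilde X'/Y}$ in which the sheaf of interest appears as a direct summand, and the ample twist you introduced must be removed at the end (weak positivity is not literally stable under arbitrary twisting by pullbacks of ample line bundles---rather, one uses that the definition of weak positivity already incorporates an ample twist, so the extra $f^*A$ can be absorbed into the test divisor). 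For the genuinely lc case your appeal to Fujino's mixed Hodge module semipositivity is correct in spirit and is exactly what \cite[Theorem 3.4.7]{Fujino-book} does, though the passage from ``lowest Hodge piece is semipositive'' to ``weakly positive in Viehweg's sense'' requires the unipotent reduction and semistable reduction arguments carried out carefully there. None of these are fatal gaps, but a self-contained proof would need to make each of these steps precise rather than gesture at them.
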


We refer to \cite{KM} for the basic definitions of the singularities of pairs. When $\Delta = 0$, Theorem \ref{thm:twistedweakpos} is nothing but Viehweg's weak positivity theorem \cite[Theorem III]{V1}.

\subsection{Okounkov bodies}
Let $X$ be a smooth projective variety of dimension $n$, and fix an \emph{admissible flag} $X_\bullet$ on $X$
$$
X_\bullet: X=X_0 \supseteq X_1\supseteq \cdots \supseteq X_{n} \supseteq X_{n} = \{x \}
$$
where each $X_i$ is an irreducible closed subvariety of $X$ having codimension $i$ and smooth at the point $x$.
Let $D$ be an effective divisor on $X$. We consider a valuation-like function
$$
\nu_{X_\bullet} \colon |D|_\Q \to \R^{n}_{\geq 0}, ~~D' \mapsto \nu_{X_\bullet}(D')=(\nu_1, \nu_2, \ldots, \nu_n)
$$
where the $\nu_i$ are defined inductively as follows:
\begin{enumerate}[$(1)$]
  \item let $\nu_1:=\mult_{X_1}D'$ and $D'_1:=D'-a_1X_1$, and then,
  \item assuming that we have defined $\nu_i$ and $D'_i$, define $\nu_{i+1}:=\mult_{X_{i+1}}(D'_i|_{X_i})$ and $D'_{i+1}=D'_i|_{X_i}-a_{i+1}X_{i+1}$.
\end{enumerate}
The \emph{Okounkov body} of $D$ with respect to $X_\bullet$ is defined as
$$
\Delta_{X_\bullet}(D):=\text{the convex closure of }\nu_{X_\bullet}(|D|_{\Q}) \text{ in } \R^{n}_{\geq 0}.
$$
By \cite[Proposition 3.3]{B}, we have $\dim \Delta_{X_\bullet}(D) = \kappa(D)$. When $D$ is not big, we use the notation $\okval_{\bullet}(D)$, which is called the \emph{valuative Okounkov body} of $D$ with respect to $X_\bullet$, for the Okounkov body $\Delta_{X_\bullet}(D)$. By \cite[Theorem A]{lm-nobody}, we have
$$
\vol_{\R^n}(\Delta_{X_\bullet}(D) = \frac{1}{n!}\vol_X(D)~~\text{for every admissible flag $X_\bullet$ on $X$}.
$$

\medskip

Recall that the \emph{restricted volume} of a divisor $D$ along an irreducible closed subvariety $V$ of dimension $v$ is defined as
$$
\vol_{X|V}(D):=\limsup_{m \to \infty} \frac{\dim \im \big( H^0(X, \mathcal{O}_X(\lfloor mD \rfloor ))\to H^0(V,\mathcal{O}_V(\lfloor mD \rfloor|_V))\big)}{m^v/v!}.
$$
When $V=X$, we simply set $\vol_X(D):=\vol_{X|X}(D)$, and we call it the \emph{volume} of $D$. The \emph{augmented restricted volume} of $D$ along $V$ is defined as
$$
\vol_{X|V}^+(D):=\lim_{\varepsilon \to 0+} \vol_{X|V}(D+\varepsilon A),
$$
where $A$ is an ample divisor on $X$. It is easy to check that the definition is independent of the choice of $A$ and $\vol_{X|V}^+(D)$ depends only on the numerical class of $D$. If $D$ is big,  then $\vol_{X|V}^+(D)=\vol_{X|V}(D)$. For more details on the restricted volumes, see \cite{CHPW-okbd I}, \cite{elmnp-restricted vol and base loci}.

\medskip

Now, let $D$ be a pseudoeffective divisor on $X$. The \emph{limiting Okounkov body} of $D$ with respect to $X_\bullet$  is defined as
$$
\oklim_{X_\bullet}(D):=\bigcap_{\varepsilon > 0} \Delta_{X_\bullet}(D+\varepsilon A)
$$
for any fixed ample divisor $A$ on $X$. This definition is independent of the choice of $A$. The limiting Okounkov body $\oklim_{X_\bullet}(D)$ depends only on the numerical class of $D$ (see \cite[Theorem C]{CHPW-okbd I}). If $D$ is big, then $\Delta_{X_\bullet}(D)=\okval_{X_\bullet}(D) = \oklim_{X_\bullet}(D)$. By \cite[Theorem 1.1]{CP}, we have $\kappa(D) \leq \dim \oklim_{X_\bullet}(D) \leq \nu_{\BDPP}(D)$ for every admissible flag $X_\bullet$.

\medskip

The following numerical Iitaka dimension was introduced by Boucksom--Demailly--P\u{a}un--Peternell \cite{BDPP}:
$$
\nu_{\BDPP}(D):=\max\left\{k\in\mathbb Z_{\geq 0}\left|\langle D^k\rangle\neq 0\right.\right\}
$$
where $\langle D^k \rangle$ is the positive intersection product (see \cite[Section 4]{lehmann-nu} for the definition and basic properties). 
By \cite[Theorem 6.2]{lehmann-nu} (see also \cite[Theorem 1.1]{CP}), we have
$$
\nu_{\BDPP}(D) = \max\left\{ \dim W \left| \vol^+_{X|W}(L) > 0\right. \right\}
$$
where the $W$ range over all irreducible closed subvarieties of $X$. We now recall some other numerical Iitaka dimensions introduced by Nakayama \cite{nakayama} and Lehmann \cite{lehmann-nu}:
$$
\arraycolsep=1.4pt\def\arraystretch{1.9}
\begin{array}{rl}
\kappa_{\sigma}(D)&:=\displaystyle  \max \left\{ k \in \Z_{\geq 0} \left| \limsup\limits_{m \to \infty} \frac{h^0(X, \lfloor mD \rfloor + A)}{m^k} > 0 \right.\right\} \\
\kappa_{\vol}(D)&:=\displaystyle \max \left\{k\in \Z_{\geq 0}\left| \liminf\limits_{\varepsilon \to 0} \frac{\vol_X(D+ \varepsilon A)}{\varepsilon^{n-k}} > 0\right. \right\}
\end{array}
$$
where $A$ is a sufficiently positive ample $\Z$-divisor on $X$. It is well known that the numerical Iitaka dimensions $\nu_{\BDPP}(D), \kappa_{\sigma}(D), \kappa_{\vol}(D)$ depend only on the numerical class of $D$. Furthermore, $\nu_{\BDPP}(D), \kappa_{\sigma}(D), \kappa_{\vol}(D)$ are nonnegative integers at most $n$ when $D$ is pseudoeffective, and $ \nu_{\BDPP}(D), \kappa_{\sigma}(D), \kappa_{\vol}(D) =n$ if and only if $D$ is big. By \cite[Proposition 3.1]{CP}, $\nu_{\BDPP}(D) \leq \kappa_{\sigma}(D), \kappa_{\vol}(D)$, and by \cite[Theorem 1.2]{CP}, the inequality can be strict (see also \cite{lesieutre}). We refer to \cite{CP} for more basic properties of numerical Iitaka dimensions.

\medskip

For an effective divisor $D$, an irreducible closed subvariety $U \subseteq X$ is called a \emph{Nakayama subvariety} of $D$ if $\dim U = \kappa(D)$ and the natural restriction map
$$
H^0(X, \mathcal{O}_X(\lfloor mD \rfloor)) \to H^0(U, \mathcal{O}_U(\lfloor mD \rfloor|_U))
$$
is injective for every integer $m \geq 0$. By construction, the restriction $D|_U$ is big on $U$. For a pseudoeffective divisor $D$, an irreducible closed subvariety $V \subseteq X$ is called a \emph{positive volume subvariety} of $D$ if $\dim V = \nu_{\BDPP}(D)$ and $\vol_{X|V}^+(D) > 0$.
These subvarieties were first defined in \cite{CHPW-okbd I}.

\begin{theorem}[{\cite[Theorems A and B]{CHPW-okbd I}}]\label{CHPWmain}
Let $X$ be a smooth projective variety of dimension $n$, and $D$ be a divisor on $X$.
\begin{enumerate}[leftmargin=0cm,itemindent=.6cm]
\item[$(1)$] Suppose that $D$ is effective. Fix an admissible flag $X_\bullet$ containing a Nakayama subvariety $U$ of $D$ such that $Y_n$ is a general point in $X$. Then $\okval_{X_\bullet}(D) \subseteq \{0 \}^{n-\kappa(D)} \times \R^{\kappa(D)}$ so that one can regard $\okval_{X_\bullet}(D) \subseteq \R^{\kappa(D)}$. Furthermore, we have
$$\dim \okval_{X_\bullet}(D)=\kappa(D) \text{ and } \vol_{\R^{\kappa(D)}}(\okval_{X_\bullet}(D))=\frac{1}{\kappa(D)!} \vol_{X|U}(D).$$
\item[$(2)$] Suppose that $D$ is pseudoeffective. Fix an admissible flag $X_\bullet$ containing a positive volume subvariety $V$ of $D$. Then $\oklim_{X_\bullet}(D) \subseteq \{0 \}^{n-\nu_{\BDPP}(D)} \times \R^{\nu_{\BDPP}(D)}$ so that one can regard $\oklim_{X_\bullet}(D) \subseteq \R^{\nu_{\BDPP}(D)}$. Furthermore, we have
$$\dim \oklim_{X_\bullet}(D)=\nu_{\BDPP}(D) \text{ and } \vol_{\R^{\nu_{\BDPP}(D)}}(\oklim_{X_\bullet}(D))=\frac{1}{\nu_{\BDPP}(D)!} \vol_{X|V}^+(D).$$
\end{enumerate}
\end{theorem}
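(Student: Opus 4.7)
The plan is to handle parts (1) and (2) in parallel by reducing each to the Lazarsfeld--Musta\c{t}\u{a} theorem on Okounkov bodies of big divisors, applied on a distinguished subvariety in the flag: namely $D|_U$ on the Nakayama subvariety $U$ for part (1), and $(D+\varepsilon A)|_V$ on the positive volume subvariety $V$ followed by the limit $\varepsilon\to 0^+$ for part (2).

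For Part (1), the defining property of a Nakayama subvariety is that the restriction $H^0(X,\mathcal{O}_X(\lfloor mD\rfloor))\hookrightarrow H^0(U,\mathcal{O}_U(\lfloor mD\rfloor|_U))$ is injective for every $m\geq 0$. Since $U\subseteq X_i$ for every $i\leq n-\kappa(D)$, a section vanishing along $X_i$ would vanish identically on $U$, contradicting injectivity; hence the first $n-\kappa(D)$ coordinates of $\nu_{X_\bullet}(s)$ vanish, which gives the inclusion $\okval_{X_\bullet}(D)\subseteq\{0\}^{n-\kappa(D)}\times\R_{\geq 0}^{\kappa(D)}$. Under this embedding the remaining coordinates equal $\nu_{X_\bullet|_U}(s|_U)$, so the graded semigroup attached to $(X_\bullet,D)$ is isomorphic to the sub-semigroup on $(X_\bullet|_U,D|_U)$ cut out by the restricted linear series $W_m:=\im\bigl(H^0(X,\lfloor mD\rfloor)\to H^0(U,\lfloor mD\rfloor|_U)\bigr)$. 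Since $D|_U$ is big on $U$ and $\dim W_m=h^0(X,\lfloor mD\rfloor)$ by injectivity, a direct adaptation of the proof of \cite[Theorem A]{lm-nobody} to sub-series produces a full-dimensional convex body in $\R^{\kappa(D)}$ with Euclidean volume $\frac{1}{\kappa(D)!}\vol_{X|U}(D)$; the genericity of $X_n$ ensures that the induced flag on $U$ genuinely computes this body.

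For Part (2), set $D_\varepsilon:=D+\varepsilon A$, which is big for $\varepsilon>0$. I would combine the definition $\oklim_{X_\bullet}(D)=\bigcap_{\varepsilon>0}\Delta_{X_\bullet}(D_\varepsilon)$, Lehmann's formula $\nu_{\BDPP}(D)=\max\{\dim W:\vol^+_{X|W}(D)>0\}$, and the Lazarsfeld--Musta\c{t}\u{a} slicing formula iterated $n-\nu_{\BDPP}(D)$ times along the flag, which identifies the slice $\Delta_{X_\bullet}(D_\varepsilon)\cap\bigl(\{0\}^{n-\nu_{\BDPP}(D)}\times\R^{\nu_{\BDPP}(D)}\bigr)$ with the restricted Okounkov body $\Delta_{X_\bullet|_V}(D_\varepsilon)$ of Euclidean volume $\frac{1}{\nu_{\BDPP}(D)!}\vol_{X|V}(D_\varepsilon)$. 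Passing to the intersection over $\varepsilon>0$ and invoking continuity of restricted volumes from \cite{elmnp-restricted vol and base loci} gives $\vol_{X|V}(D_\varepsilon)\to\vol^+_{X|V}(D)>0$, so the limit contains a convex body of the asserted Euclidean volume inside $\{0\}^{n-\nu_{\BDPP}(D)}\times\R^{\nu_{\BDPP}(D)}$.

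The main obstacle is the matching upper bound: no mass of $\oklim_{X_\bullet}(D)$ may survive outside $\{0\}^{n-\nu_{\BDPP}(D)}\times\R^{\nu_{\BDPP}(D)}$. For any subvariety $W$ in the flag with $\dim W>\nu_{\BDPP}(D)$, Lehmann's maximality forces $\vol^+_{X|W}(D)=0$, so its corresponding slice of $\Delta_{X_\bullet}(D_\varepsilon)$ has Euclidean volume tending to $0$ as $\varepsilon\to 0^+$. Upgrading this pointwise vanishing into uniform transverse collapse is the delicate step; I would do it by combining log-concavity of the slice profile along an admissible flag (Brunn--Minkowski for Okounkov bodies) with continuity of augmented restricted volumes, which together force $\bigcap_{\varepsilon>0}\Delta_{X_\bullet}(D_\varepsilon)$ into the hyperplane $\{0\}^{n-\nu_{\BDPP}(D)}\times\R^{\nu_{\BDPP}(D)}$, completing both the dimension count and the volume identity.
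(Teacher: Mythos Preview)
This theorem is quoted from \cite{CHPW-okbd I} (Theorems~A and~B there) and is \emph{not} proved in the present paper; it appears in Section~\ref{sec:prelim} as a cited preliminary result. There is therefore no in-paper proof to compare your proposal against.

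On the merits of your sketch: Part~(1) is essentially correct and matches the argument in \cite{CHPW-okbd I}. Injectivity of the restriction $H^0(X,\lfloor mD\rfloor)\hookrightarrow H^0(U,\lfloor mD\rfloor|_U)$ forces the first $n-\kappa(D)$ valuation coordinates to vanish (a section with $\nu_i>0$ for some $i\leq n-\kappa(D)$ would restrict to zero on $U\subseteq X_i$), after which the valuative body is identified with the Okounkov body of the restricted graded linear series on $U$, and \cite[Theorem~2.13, Remark~2.8]{lm-nobody} computes its volume as $\vol_{X|U}(D)/\kappa(D)!$.

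Part~(2) has a genuine gap in the step you yourself flag as delicate. Brunn--Minkowski log-concavity of slice volumes, together with $\vol_{X|X_i}^+(D)=0$ for $\dim X_i>\nu_{\BDPP}(D)$, does \emph{not} force the intersection $\bigcap_\varepsilon\Delta_{X_\bullet}(D_\varepsilon)$ into $\{0\}^{n-\nu}\times\R^{\nu}$. Concretely, for nested convex bodies in $\R^n$ the hypotheses ``slice at $x_1=0$ has $(n-1)$-volume $\to 0$'' and ``total $n$-volume $\to 0$'' are compatible with the support in the $x_1$-direction staying bounded away from zero (take the cone over a shrinking base with fixed height); your argument is purely convex-geometric and cannot rule this out. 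The cone trick you implicitly rely on---if the limit contains both a point with $x_1>0$ and a set of positive $\nu$-volume in $\{0\}^{n-\nu}\times\R^{\nu}$, then it has dimension $\geq\nu+1$---only yields a contradiction once you already know $\dim\oklim_{X_\bullet}(D)\leq\nu_{\BDPP}(D)$. That dimension bound, valid for \emph{every} admissible flag, is the real missing input; it is established in \cite{CHPW-okbd I} (cf.\ also \cite[Theorem~1.1]{CP}) by methods involving the divisorial Zariski decomposition and the structure of $\bm(D)$, not by Brunn--Minkowski. Once it is in hand, your lower-bound computation of the slice volume via iterated \cite[Theorem~4.26]{lm-nobody} and the limit $\vol_{X|V}(D_\varepsilon)\to\vol_{X|V}^+(D)>0$ does finish the proof: a $\nu$-dimensional convex set containing a set of positive $\nu$-volume inside the affine subspace $\{0\}^{n-\nu}\times\R^{\nu}$ must lie entirely in that subspace.
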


We refer to \cite{B}, \cite{CHPW-okbd I, CPW-okbd II}, \cite{KK}, \cite{lm-nobody} for more basic properties of Okounkov bodies. See also \cite[Remark 3.5]{CP} and \cite[Remark 2.6]{CPW-okbdab}.

\section{Proofs of main results}\label{sec:main}

In this section, we prove the main result of this paper, Theorem \ref{thm:main}, and its consequences, Theorems \ref{thm:oklimK} and \ref{thm:okvalK}. The following lemma is the key ingredient of the proof.

\begin{lemma}\label{lem:maintechnical}
Let $f \colon X \to Y$ be a surjective morphism between smooth projective varieties $X$ and $Y$ with connected fibers, and $F$ be a general fiber of $f$. Let $D, R$ be divisors on $X$, and $D_Y$ be a divisor on $Y$ satisfying the following conditions:
\begin{enumerate}
 \item[$(1)$] $D \sim_{\Q} f^*D_Y + R$.
 \item[$(2)$] $f_*\mathcal{O}_X(mR)$ is weakly positive for every sufficiently divisible integer $m > 0$.
 \item[$(3)$] $R|_F$ is effective.
 \item[$(4)$] $D_Y$ is big.
\end{enumerate}
Let $N \subseteq F$ be a Nakayama subvariety of $R|_F$. Then we have
$$
\vol_{X|N}(D) = \vol_{F|N}(R|_F ).
$$
In particular, $\kappa(D)\geq\kappa(R|_F)$ holds.
\end{lemma}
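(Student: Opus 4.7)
The plan is to prove the equality by two inequalities, with the lower bound being the substantive step that uses the weak positivity hypothesis; the ``in particular'' statement $\kappa(D) \geq \kappa(R|_F)$ will then drop out for free. For the upper bound $\vol_{X|N}(D) \leq \vol_{F|N}(R|_F)$, I would use the factorization $N \subseteq F \subseteq X$: the restriction map $H^0(X, \lfloor mD \rfloor) \to H^0(N, \lfloor mD \rfloor|_N)$ factors through $H^0(F, \lfloor mD \rfloor|_F)$, and since $f^* D_Y|_F \sim_\Q 0$ forces $D|_F \sim_\Q R|_F$, the dimension of this image is bounded by that of $\mathrm{Image}(H^0(F, \lfloor mR|_F \rfloor) \to H^0(N, \lfloor mR|_F \rfloor|_N))$. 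Normalizing by $m^{\dim N}/\dim N!$ and taking the limit supremum yields the desired upper bound.

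For the lower bound $\vol_{X|N}(D) \geq \vol_{F|N}(R|_F)$, the strategy is to show that the restriction $H^0(X, mD) \to H^0(F, mR|_F)$ is asymptotically surjective for sufficiently divisible $m$. For such $m$, $mD \sim mf^* D_Y + mR$ is Cartier, and the projection formula identifies $H^0(X, mD)$ with $H^0(Y, f_*\mathcal{O}_X(mR) \otimes \mathcal{O}_Y(mD_Y))$; evaluation at a general $y \in Y$, where generic flatness and cohomology-and-base-change give $f_*\mathcal{O}_X(mR) \otimes k(y) \cong H^0(F, mR|_F)$, realizes the restriction to $F$ as the evaluation of the sheaf $f_*\mathcal{O}_X(mR) \otimes \mathcal{O}_Y(mD_Y)$ at $y$. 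The whole matter then reduces to showing that this sheaf is globally generated at a general $y$ for large divisible $m$. This reduction step is the main obstacle, and it is where weak positivity of $f_*\mathcal{O}_X(mR)$ is essential: using bigness of $D_Y$, I would write $cD_Y \sim_\Q A + E$ with $A$ ample and $E$ effective; weak positivity then provides, for appropriate $\alpha,\beta$, that $\hat{S}^{\alpha\beta}(f_*\mathcal{O}_X(mR)) \otimes \mathcal{O}_Y(\beta A)$ is generically globally generated. Composing with the natural multiplication map $\hat{S}^{\alpha\beta}(f_*\mathcal{O}_X(mR)) \to f_*\mathcal{O}_X(\alpha\beta m R)$ and multiplying by a fixed section cutting out $E$, and relabeling the index $m$, yields the required generic global generation; this asymptotic-multiplicative manipulation is in the spirit of Viehweg's standard weak positivity arguments.

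Once generic global generation is in hand, the restriction $H^0(X, mD) \to H^0(F, mR|_F)$ is surjective at a general $y$. Because $N$ is a Nakayama subvariety of $R|_F$, the restriction $H^0(F, mR|_F) \hookrightarrow H^0(N, (mR|_F)|_N)$ is injective, so the image of $H^0(X, mD) \to H^0(N, mD|_N)$ has the same dimension as $H^0(F, mR|_F)$; dividing by $m^{\dim N}/\dim N!$ and taking the limit supremum gives the lower bound. Finally, the ``in particular'' statement is immediate: the Nakayama property together with $\dim N = \kappa(R|_F)$ implies that $(R|_F)|_N$ is big on $N$, so $\vol_{F|N}(R|_F) > 0$ and hence $\vol_{X|N}(D) > 0$, which forces $\kappa(D) \geq \dim N = \kappa(R|_F)$.
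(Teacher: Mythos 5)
Your upper bound argument and the overall strategy for the lower bound (projection formula, evaluation at a general fiber, weak positivity of $f_*\mathcal{O}_X(mR)$ combined with bigness of $D_Y$) match the paper's. But there is a genuine gap in the decisive step. You claim that generic global generation yields that the restriction $H^0(X, mD) \to H^0(F, mR|_F)$ is \emph{surjective} at a general $y$. That is too strong and, in fact, false in general. Weak positivity gives generic global generation of $\hat{S}^{k}\big(f_*\mathcal{O}_X(m_1R)\big)\otimes\mathcal{O}_Y(kH)$, whose fiber at a general $y$ is $S^k H^0(F, m_1R|_F)$. To get sections of $m_0 m_1 k R$ restricted to $F$ you must further compose with the multiplication map
$$
S^{m_0}S^k H^0(F, m_1R|_F) \longrightarrow H^0(F, m_0 m_1 k R|_F),
$$
and this map is \emph{not} surjective unless the section ring of $R|_F$ happens to be generated in degree $m_1$. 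So your ``asymptotic-multiplicative manipulation'' does not produce the surjectivity you invoke, and without it the dimension count does not close.

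The paper does not aim for surjectivity; it only needs the \emph{image} of that multiplication map restricted to $N$ to have the correct asymptotic growth, and it gets this from Fujita approximation for graded linear series (Di Biagio--Pacienza, \cite[Theorem 3.14]{DP}), applied to the graded linear series $W_\bullet$ on $N$ with $W_i = \mathrm{Image}\big(H^0(F, iR|_F)\to H^0(N, iR|_F|_N)\big)$. That is the missing ingredient in your argument, and it is exactly why the bound is asymptotic rather than termwise. A second, smaller point: one has to replace $f$ by a birational model $f'\colon X'\to Y'$ (\cite[Lemma 7.3]{V1}) to control the double-dual $(\cdot)^{\vee\vee}$ that appears in the definition of weak positivity; this ensures the extra divisor arising from reflexivization is $\tau'$-exceptional and does not change global sections. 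You should incorporate both points: use Fujita approximation for the graded linear series on $N$ in place of the surjectivity claim, and pass to the model $Y'\to Y$ before invoking generic global generation.
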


\begin{proof}
The proof is similar to the proof of \cite[Lemma 3.1]{CJPW}, but we present the full details for readers' convenience.
As $R|_F = D|_F$, we have $\vol_{F|N}(R|_F)=\vol_{F|N}(D|_F)$. This implies that
$$
\vol_{X|N}(D) \leq \vol_{F|N}(R|_F).
$$
For an ample divisor $H$ on $Y$, notice that
$$
\vol_{X|N}(D) \geq \vol_{X|N}\left(R+ \frac{1}{m}f^*H\right)
$$
for any sufficiently large integer $m>0$. Thus it is enough to show that
\begin{equation}\label{eq:keylemeq1}
\vol_{X|N}\left(R+ \frac{1}{m}f^*H\right) \geq \vol_{F|N}(R|_F)
\end{equation}
for any sufficiently large and divisible integer $m>0$.

Note that $\vol_{F|N}(R|_F) > 0$ because $N$ is a Nakayama subvariety of $R|_F$.
Let $W_\bullet$ be a graded linear series on $N$ associated to $R|_N$ consisting of subspaces
$$
W_i := \im \big( H^0(F, \lfloor iR |_F \rfloor) \to H^0(N, \lfloor iR|_F \rfloor|_N) \big) \subseteq H^0(N, \lfloor iR|_N \rfloor)
$$
for each integer $i \geq 0$. Then $\vol_{N}(W_\bullet) =\vol_{F|N}(R|_F) > 0$.
By Fujita approximation \cite[Theorem 3.14]{DP}, for any $\varepsilon > 0$, we have
\begin{equation}\label{eq:keylemeq2}
\frac{\dim \im \big( S^{p_1} W_{p_2} \to W_{p_1p_2} \big)}{(p_1p_2)^n/n!} \geq \vol_{F|N}(R|_F) - \varepsilon
\end{equation}
for every sufficiently large and divisible integers $p_1,p_2>0$, where $n=\dim N$.

Now, by \cite[Lemma 7.3]{V1}, we have a commutative diagram
$$
\xymatrix{
X' \ar[d]_-{f'} \ar[r]^-{\tau'} & X \ar[d]^-{f} \\
Y' \ar[r]_-{\tau} & Y
}
$$
such that $\tau \colon Y' \to Y$ is a birational morphism with $Y'$ smooth projective, $X'$ is a resolution of singularities of the main component of $X \times_Y Y'$, and $f' \colon X' \to Y'$ is the induced morphism such that every divisor $B'$ on $X'$ with $\codim f'(B') \geq 2$ is $\tau'$-exceptional divisor.
We may assume that $\tau$ is isomorphic over a neighborhood of $f(F)$, so that we may regard that $F$ is also a general fiber of $f'$ and $R'|_F=R|_F$, where $R':=\tau'^*R$.
For a sufficiently large and divisible integer $m_1>0$, there is a map
$$
\tau^*f_*\mathcal{O}_{X}(m_1R) \to f'_*\mathcal{O}_{X'}(m_1R'),
$$
which is surjective over some open subset of $Y'$. Thus
$f'_*\mathcal{O}_{X'}(m_1R')$ is weakly positive on $Y'$. Let $H$ be an ample Cartier divisor on $Y$, and $H':=\tau^*H$. Then there is some integer $k>0$ such that $\hat{S}^k f'_*\mathcal{O}_{X'}(m_1R') \otimes \mathcal{O}_{Y'}(kH')$ is generated by global sections over some open subset of $Y'$.

For any integer $m_0 > 0$, consider the map
$$
\varphi_{m_0} \colon \hat{S}^{m_0}\big(\hat{S}^k\big(f'_*\mathcal{O}_{X'}(m_1R') \big) \otimes \mathcal{O}_{Y'}(kH')\big) \to \big( f'_*\mathcal{O}_{X'}(m_0m_1 kR') \otimes \mathcal{O}_{Y'}(m_0kH') \big)^{\vee\vee}.
$$
By \cite[Lemma 2.7]{CJPW}, we can find an effective divisor $B$ on $X'$ such that $\codim f'(B) \geq 2$ and
$$
\big(f'_*\mathcal{O}_{X'}\big(m_0m_1 kR' + f'^*(m_0 k H') \big) \big)^{\vee\vee} = f'_*\mathcal{O}_{X'}\big(m_0m_1 kR' + f'^*(m_0 k H') +B \big).
$$
Note that $B$ is $\tau'$-exceptional. Thus we have
$$
H^0\big(Y', \big( f'_* \mathcal{O}_{X'}\big(m_0m_1 kR' + f'^*(m_0 k H') \big) \big)^{\vee\vee} \big)= H^0\big(X, \mathcal{O}_X \big( m_0m_1 kR + f^*(m_0kH) \big) \big).
$$
For any sufficiently large and divisible integers $m_0, m_1 > 0$, consider the commutative diagram
$$
\xymatrix{
S^{m_0}S^k H^0(F, \mathcal{O}_F(m_1 R|_F)) \ar[r]^-{\psi} & H^0(N, \mathcal{O}_N(m_0m_1kR|_N)) \\
H^0(Y', \hat{S}^{m_0}(\hat{S}^k(f_*'\mathcal{O}_{X'}(m_1R')) \otimes \mathcal{O}_{Y'}(kH')) \ar[r]^-{H^0(\varphi_{m_0})} \ar[u] & H^0(X, \mathcal{O}_X(m_0m_1kR + f^*(m_0kH))). \ar[u]
}
$$
By the generic global generation of $\hat{S}^k(f_*'(\mathcal{O}_{X'}(m_1R')) \otimes \mathcal{O}_{Y'}(kH'))$, the vertical upward map on the left is surjective. By Fujita approximation (\ref{eq:keylemeq2}), we obtain
$$
\frac{\dim \im(\psi)}{(m_0m_1k)^n/n!} \geq \vol_{F|N}(R|_F) - \varepsilon
$$
for a sufficiently small number $\varepsilon > 0$. Thus we obtain
\begin{small}
\begin{equation}\label{eq:keylemeq3}
\frac{\dim \im\big(H^0(X, m_0m_1kR + f^*(m_0kH)) \to H^0(N, m_0m_1kR|_N) \big)}{(m_0m_1k)^n/n!}\geq \vol_{F|N}(R|_F) - \varepsilon.
\end{equation}
\end{small}\\[-7pt]
Here we can make $\varepsilon$ arbitrarily small by taking larger integers $m_0, m_1>0 $.
Notice that
\begin{footnotesize}
$$
\vol_{X|N}\left(R + \frac{1}{m_1}f^*H \right) =  \limsup_{m_0 \to \infty} \frac{\dim \im\big(H^0(X, m_0m_1kR + f^*(m_0kH)) \to H^0(N, m_0m_1kR|_N) \big)}{(m_0m_1k)^n/n!}.
$$
\end{footnotesize}\\[-10pt]
Hence (\ref{eq:keylemeq3}) implies (\ref{eq:keylemeq1}), and this completes the proof.
\end{proof}

Now, we are ready to prove Theorem \ref{thm:main}.

\begin{proof}[Proof of Theorem \ref{thm:main}]
First, we write
$$
D + f^*A_Y \sim_{\Q} f^*D_Y + (R+f^*A_Y).
$$
By the usual subadditivity property of Okounkov bodies, we have
\begin{equation}\label{eq:okbdsubadditivity}
\okval_{X_\bullet}(D+f^*A_Y) \supseteq \okval_{X_\bullet}(f^*D_Y) + \okval_{X_\bullet}(R+f^*A_Y).
\end{equation}
It is clear that
$$
\okval_{X_\bullet}(f^*D_Y)=\okval_{Y_\bullet}(D_Y)\times\{0\}^{\dim F} \subseteq \R_{\geq 0}^{\dim Y}\times\{0\}^{\dim F}.\\
$$
Thus we may identify $\okval_{Y_\bullet}(D_Y)$ with $\okval_{X_\bullet}(f^*D_Y)$.
We can also regard
$$
\okval_{F_\bullet}(R|_F) \subseteq \{ 0\}^{\dim Y} \times \R_{\geq 0}^{\dim F}.
$$
To prove the theorem, by considering (\ref{eq:okbdsubadditivity}), it is sufficient to show that $\okval_{F_\bullet}(R|_F)$ is a subset of $ \okval_{X_\bullet}(R+f^*A_Y)$.
To this end, let $N$ be the Nakayama subvariety of $R|_F$ contained in the admissible flag $F_\bullet$ so that $\dim N = \kappa(R|_F)$.
By Theorem \ref{CHPWmain}, we have
\begin{equation}\label{eq:okvalRF}
\okval_{F_\bullet}(R|_F) \subseteq \R^{\kappa(R|_F)} \text{ and }
\kappa(R|_F)!\cdot  \vol_{\R^{\kappa(R|_F)}}\okval_{F_\bullet}(R|_F) =\vol_{F|N}(R|_F).
\end{equation}
Now, consider the graded linear series $W_\bullet$ on $N$ associated to $(R+f^*A_Y)|_N = R|_N$ consisting of subspaces
$$
W_i = \text{Image}\big(H^0(X, \lfloor iR+if^*A_Y \rfloor) \to H^0(N, \lfloor iR+if^*A_Y \rfloor|_N) \big)
$$
for each integer $i \geq 0$. Then Lemma \ref{lem:maintechnical} and (\ref{eq:okvalRF}) imply that
$$
\vol_N(W_\bullet) = \vol_{X|N}(R+f^*A_Y) = \vol_{F|N}(R|_F) = \kappa(R|_F)!\cdot  \vol_{\R^{\kappa(R|_F)}}\okval_{F_\bullet}(R|_F).
$$
Let $N_\bullet$ be an admissible flag on $N$ such that $N_i = X_{i + \dim X - \dim N}$ for all $0 \leq i \leq \dim N$.
Since $N_{\dim N}=X_{\dim X}$ is a general point, it follows from \cite[Remark 2.8 and Theorem 2.13]{lm-nobody} that
$$
\vol_{\R^{\kappa(R|_F)}}(\Delta_{N_\bullet}(W_\bullet)) = \frac{1}{\kappa(R|_F)!} \vol_N(W_\bullet) =  \vol_{\R^{\kappa(R|_F)}}\okval_{F_\bullet}(R|_F).
$$
As we have
\begin{equation*}\label{eq:mainthmeq1}
\Delta_{N_\bullet}(W_\bullet) \subseteq \okval_{X_\bullet}(R+f^*A_Y)_{x_1=\cdots=x_{\dim Y}=0} \subseteq  \okval_{F_\bullet}(R|_F),
\end{equation*}
we see that $\okval_{X_\bullet}(R+f^*A_Y)_{x_1=\cdots=x_{\dim Y}=0} = \okval_{F_\bullet}(R|_F)$.
Hence we may regard $\okval_{F_\bullet}(R|_F)$ as a subset of $ \okval_{X_\bullet}(R+f^*A_Y)$, and we complete the proof.
\end{proof}

\begin{remark}\label{rem:genptassump}
The assumption that $F_\bullet$ is centered at a general point of $F$ in Theorem \ref{thm:main} is only used when we apply Theorem \ref{CHPWmain} and \cite[Remark 2.8]{lm-nobody}. Suppose that $R|_F$ is big. Then $F$ is the unique Nakayama subvariety of $R|_F$, and $W_\bullet$ satisfies Condition
(B) in \cite[Definition 2.5]{lm-nobody}. In this case, we do not need the assumption that $F_\bullet$ is centered at a general point of $F$ because we can apply \cite[Theorem A]{lm-nobody} instead and we do not need \cite[Remark 2.8]{lm-nobody}.
\end{remark}

\begin{corollary}\label{cor:oklim}
Let $f\colon X\to Y$ be an algebraic fiber space with general fiber $F$. Let $D,R$ be divisors on $X$, and $D_Y$ be a divisor on $Y$ satisfying the following conditions:
\begin{enumerate}
 \item[$(1)$] $D \sim_{\Q} f^*D_Y + R$.
 \item[$(2)$] There exists an ample divisor $A$ on $X$ such that  for any sufficiently small rational number $\varepsilon > 0$, the sheaf $f_*\mathcal{O}_X(m(R+\varepsilon A))$ is weakly positive for every sufficiently divisible integer $m > 0$.
  \item[$(3)$] $R|_F$ is pseudoeffective.
 \item[$(4)$] $D_Y$ is pseudoeffective.
\end{enumerate}
Let $X_\bullet$ be a fiber-type admissible flag on $X$ associated to $Y_\bullet, F_\bullet$, where $Y_\bullet$ is an admissible flag on $Y$ centered at the general point $f(F)$ of $Y$ containing a positive volume subvariety of $D_Y$, and $F_\bullet$ is an admissible flag on $F$ containing a positive volume subvariety of $R|_F$. Then we have
$$
\oklim_{X_\bullet}(D) \supseteq \oklim_{Y_\bullet}(D_Y) + \oklim_{F_\bullet}(R|_F).
$$
In particular,
$$
\nu_{\BDPP}(D) \geq \nu_{\BDPP}(D_Y) + \nu_{\BDPP}(R|_F).
$$
\end{corollary}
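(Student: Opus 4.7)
The plan is to deduce the inclusion from Theorem~\ref{thm:main} applied to a one-parameter ample perturbation of the data, and then intersect over the perturbation to recover the limiting Okounkov bodies. Fix an ample divisor $A$ on $X$ and an ample divisor $H$ on $Y$. For each sufficiently small rational $\varepsilon > 0$, set $\tilde D_Y := D_Y + \varepsilon H$, $\tilde R := R + \varepsilon A$, and $\tilde D := f^*\tilde D_Y + \tilde R$, so that $\tilde D \sim_\Q D + \varepsilon A + \varepsilon f^*H$. Since $D_Y$ is pseudoeffective and $H$ is ample, $\tilde D_Y$ is big on $Y$; likewise $\tilde R|_F = R|_F + \varepsilon A|_F$ is big on $F$. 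Hypothesis~(2) of the corollary furnishes weak positivity of $f_*\mathcal{O}_X(m\tilde R)$ for all sufficiently divisible $m > 0$.

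With the ample divisor $\tilde A_Y := \varepsilon H$ on $Y$, every hypothesis of Theorem~\ref{thm:main} holds for $(\tilde D, \tilde D_Y, \tilde R, \tilde A_Y)$. Indeed, the bigness of $\tilde D_Y$ and of $\tilde R|_F$ means that $Y$ and $F$ themselves are the Nakayama subvarieties in question and are trivially contained in $Y_\bullet$ and $F_\bullet$; the centering of $F_\bullet$ at a general point of $F$ may be dropped thanks to Remark~\ref{rem:genptassump} since $\tilde R|_F$ is big. Theorem~\ref{thm:main} then gives
$$\Delta_{X_\bullet}(D + \varepsilon A + 2\varepsilon f^*H) \supseteq \Delta_{Y_\bullet}(D_Y + \varepsilon H) + \Delta_{F_\bullet}((R + \varepsilon A)|_F)$$
for every small $\varepsilon > 0$, the valuative bodies on the right coinciding with the usual Okounkov bodies by bigness.

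Intersecting over $\varepsilon > 0$ yields the desired containment. On the left, $A + 2f^*H$ is ample on $X$ (ample plus nef pullback of ample), so $\bigcap_\varepsilon \Delta_{X_\bullet}(D + \varepsilon(A + 2f^*H)) = \oklim_{X_\bullet}(D)$ by the very definition of the limiting Okounkov body. On the right, for each $\varepsilon$ one has $\Delta_{Y_\bullet}(D_Y + \varepsilon H) \supseteq \oklim_{Y_\bullet}(D_Y)$ and $\Delta_{F_\bullet}((R + \varepsilon A)|_F) \supseteq \oklim_{F_\bullet}(R|_F)$, so their Minkowski sum contains the $\varepsilon$-independent set $\oklim_{Y_\bullet}(D_Y) + \oklim_{F_\bullet}(R|_F)$, and this lower bound is preserved upon intersection. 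The inclusion in the corollary follows.

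The $\nu_{\BDPP}$-inequality then drops out by comparing dimensions. By the positive-volume-subvariety hypothesis and Theorem~\ref{CHPWmain}(2), one has $\dim \oklim_{Y_\bullet}(D_Y) = \nu_{\BDPP}(D_Y)$ and $\dim \oklim_{F_\bullet}(R|_F) = \nu_{\BDPP}(R|_F)$; under the fiber-type identification these bodies lie in the complementary orthogonal coordinate subspaces $\R^{\dim Y}\times\{0\}^{\dim F}$ and $\{0\}^{\dim Y}\times\R^{\dim F}$ of $\R^{\dim X}$, so their Minkowski sum has dimension $\nu_{\BDPP}(D_Y) + \nu_{\BDPP}(R|_F)$. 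Combined with the general bound $\dim \oklim_{X_\bullet}(D) \leq \nu_{\BDPP}(D)$, this gives the claim. The main obstacle is essentially bookkeeping: one has to ensure that the perturbation $(\tilde D, \tilde D_Y, \tilde R, \tilde A_Y)$ really fits into Theorem~\ref{thm:main}---in particular, that bigness of the perturbed data trivializes the Nakayama-subvariety containment and, via Remark~\ref{rem:genptassump}, the general-point hypothesis on $F_\bullet$---and that the $\varepsilon$-intersection on both sides of the inclusion is handled so that the left-hand side collapses to $\oklim_{X_\bullet}(D)$ while the right-hand lower bound is retained.
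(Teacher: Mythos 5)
Your proof is correct and follows essentially the same route as the paper's: perturb by an ample on $X$ (coming from hypothesis (2)) and an ample pullback from $Y$, apply Theorem~\ref{thm:main} together with Remark~\ref{rem:genptassump} to the big perturbed divisors (for which the Nakayama subvarieties are trivially $Y$ and $F$), and then intersect over the perturbation. The only cosmetic difference is that the paper carries two independent small parameters $\varepsilon,\delta$ and lets both go to zero, whereas you use a single parameter; since the limiting Okounkov body is independent of the ample class used in its definition, both handlings of the limit are fine.
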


\begin{proof}
Let $A_Y$ be an ample divisor on $Y$. Then $D_Y+\delta A_Y$ is big for any $\delta > 0$, and $Y$ itself is a Nakayama subvariety of $D_Y+\delta A_Y$. Similarly, $F$ is a Nakayama subvariety of $(R+\varepsilon A)|_F$ for any $\varepsilon > 0$ as $(R+\varepsilon A)|_F$ is big. \cite[Lemma 2.4]{CJPW} implies that $D+\varepsilon A + \delta f^*A_Y$ is big for any $\varepsilon, \delta > 0$.
Then by Theorem \ref{thm:main} and Remark \ref{rem:genptassump},
\begin{equation}\label{eq:compareokbd+smallample}
\okbd_{X_\bullet}(D+\varepsilon A + 2\delta f^*A_Y) \supseteq \okbd_{Y_\bullet}(D_Y+\delta A_Y) +\okbd_{F_\bullet}((R+\varepsilon A)|_F)
\end{equation}
for any sufficiently small rational numbers $\varepsilon, \delta > 0$.
By taking $\varepsilon \to 0, \delta \to 0$, we obtain
$$
\oklim_{X_\bullet}(D) \supseteq \oklim_{Y_\bullet}(D_Y) + \oklim_{F_\bullet}(R|_F).
$$
Now, by Theorem \ref{CHPWmain}, we have
\begin{align*}
\nu_{\BDPP}(D) \geq \dim \oklim_{X_\bullet}(D) &\geq \dim (\oklim_{Y_\bullet}(D_Y) + \oklim_{F_\bullet}(R|_F))\\
&=\dim \oklim_{Y_\bullet}(D_Y)+\dim\oklim_{F_\bullet}(R|_F)\\
& = \nu_{\BDPP}(D_Y) + \nu_{\BDPP}(R|_F),
\end{align*}
and we finish the proof.
\end{proof}

Theorem \ref{thm:oklimK} easily follows from Corollary \ref{cor:oklim}.

\begin{proof}[Proof of Theorem \ref{thm:oklimK}]
Let $A$ be any effective ample divisor on $X$. Then $(X, \varepsilon A)$ is a klt pair for any sufficiently small rational number $\varepsilon > 0$, and hence, $f_*\mathcal{O}_X(m(K_{X/Y} + \varepsilon A))$ is weakly positive for every sufficiently divisible integer $m > 0$ by Theorem \ref{thm:twistedweakpos}. Note that $K_X|_F = K_F$. Then the first part (1) of the theorem follows from Corollary \ref{cor:oklim}.

To prove the second part (2) of the theorem, assume that $K_F$ is big and the equality $\nu_{\BDPP}(K_X)=\nu_{\BDPP}(K_Y) + \nu_{\BDPP}(K_F)$ holds. As $\nu_{\BDPP}(K_F) = \dim F = \dim X - \dim Y$, we get $\dim Y - \nu_{\BDPP}(K_Y) = \dim X - \nu_{\BDPP}(X)$.
By Theorem \ref{CHPWmain}, we obtain the following inclusions
$$
\def\arraystretch{1.3}
\begin{array}{l}
\oklim_{F_\bullet}(K_F) \subseteq \{0\}^{\dim X - \nu_{\BDPP}(K_X)} \times \{0\}^{\nu_{\BDPP}(K_Y)}\times \R^{\nu_{\BDPP}(K_F)}\;\;\text{and}\\
\oklim_{Y_\bullet}(K_Y) \subseteq \{0\}^{\dim X - \nu_{\BDPP}(K_X)} \times \R^{\nu_{\BDPP}(K_Y)}\times \{0\}^{\nu_{\BDPP}(K_F)}
\end{array}
$$
which imply
$$
\def\arraystretch{1.3}
\begin{array}{l}
\oklim_{Y_\bullet}(K_Y)  + \oklim_{F_\bullet}(K_F) \subseteq \{0\}^{\dim X - \nu_{\BDPP}(K_X)} \times \R^{\nu_{\BDPP}(K_X)}.
\end{array}
$$
Furthermore, Theorem \ref{CHPWmain} also implies
$$
\def\arraystretch{1.3}
\begin{array}{l}
\dim (\oklim_{Y_\bullet}(K_Y)  + \oklim_{F_\bullet}(K_F))  = \nu_{\BDPP}(K_Y) + \nu_{\BDPP}(K_F) = \nu_{\BDPP}(K_X).
\end{array}
$$
Since $\nu_{\BDPP}(K_X) \geq \dim \oklim_{X_\bullet}(K_X) $ and $\oklim_{X_\bullet}(K_X) \supseteq \oklim_{Y_\bullet}(K_Y) + \oklim_{F_\bullet}(K_F)$, it follows that
$$
\oklim_{X_\bullet}(K_X) \subseteq \{0\}^{\dim X - \nu_{\BDPP}(K_X)} \times \R^{\nu_{\BDPP}(K_X)}
$$
and $\oklim_{X_\bullet}(K_X)$ has full dimension in $\R^{\nu_{\BDPP}(K_X)}$.
By \cite[Theorem 1.2]{CPW-okbd II}, $X_\bullet$ contains a positive volume subvariety of $X$. Then the canonical volume formula in (2) follows from Theorem \ref{CHPWmain}.

Finally, suppose that $K_F$ is big and $f$ is not birationally isotrivial so that $\var(f) > 0$. By Theorem \ref{thm:var}, we have
$$
\kappa(K_{X/Y}) \geq \kappa(K_F) + \var(f) > \kappa(K_F).
$$
Thus we get
$$
\dim \oklim_{X_\bullet}(K_{X/Y}) \geq \kappa(K_{X/Y}) > \kappa(K_F) = \dim \oklim_{F_\bullet}(K_F).
$$
Note that we have shown $\oklim_{X_\bullet}(K_{X/Y}) \supseteq \oklim_{F_\bullet}(K_F)$ in the proofs of Theorem \ref{thm:main} and Corollary \ref{cor:oklim}. The strict inequality above implies
$$
\oklim_{X_\bullet}(K_{X/Y}) \supsetneq \oklim_{F_\bullet}(K_F).
$$
Therefore, using the usual subadditivity of Okounkov bodies, we obtain
\begin{align*}
\oklim_{X_\bullet}(K_X) &\supseteq \oklim_{Y_\bullet}(K_Y) + \oklim_{X_\bullet}(K_{X/Y})\\
&\supsetneq \oklim_{Y_\bullet}(K_Y) + \oklim_{F_\bullet}(K_F).
\end{align*}
This completes the proof.
\end{proof}

\begin{remark}\label{rem:anothernumIitaka}
In the setting of Corollary \ref{cor:oklim}, if we take an ample divisor $A_Y$ on $Y$ with $A \geq f^*A_Y$, then
(\ref{eq:compareokbd+smallample}) and \cite[Theorem A]{lm-nobody} show that
$$
\vol_X(D+3\varepsilon A) \geq \vol_X(D+\varepsilon A + 2\varepsilon f^*A_Y) \geq \vol_Y(D_Y + \varepsilon A_Y) \cdot \vol_F((R+\epsilon A)|_F)
$$
for any sufficiently small rational number $\varepsilon > 0$. This yields that
$$
\kappa_{\vol}(D) \geq \kappa_{\vol}(D_Y) + \kappa_{\vol}(R|_F).
$$
In particular, for any algebraic fiber space $f \colon X \to Y$ with general fiber $F$, we have
$$
\kappa_{\vol}(K_X) \geq \kappa_{\vol}(K_Y) + \kappa_{\vol}(K_F).
$$
\end{remark}

\begin{corollary}\label{cor:okval}
Let $f \colon X \to Y$ be an algebraic fiber space with general fiber $F$. Let $D,R$ be divisors on $X$, and $D_Y$ be a divisor on $Y$ satisfying the following conditions:
\begin{enumerate}
 \item[$(1)$] $D \sim_{\Q} f^*D_Y + R$.
 \item[$(2)$] $f_*\mathcal{O}_X(mR)$ is weakly positive for every sufficiently divisible integer $m > 0$.
 \item[$(3)$] $R|_F$ is effective.
 \item[$(4)$] $D_Y$ is big.
\end{enumerate}
Let $X_\bullet$ be a fiber-type admissible flag on $X$ associated to $Y_\bullet, F_\bullet$, where $Y_\bullet$ is an admissible flag on $Y$ centered at the general point $f(F)$ of $Y$ containing a Nakayama subvariety of $D_Y$, and $F_\bullet$ is an admissible flag on $F$ centered at a general point of $F$ containing a Nakayama of $R|_F$. Then we have
$$
\okval_{X_\bullet}(D) \supseteq \okval_{Y_\bullet}(D_Y) + \okval_{F_\bullet}(R|_F).
$$
In particular,
$$
\kappa(D) \geq \kappa(D_Y) + \kappa(R|_F).
$$
\end{corollary}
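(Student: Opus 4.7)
The plan is to reduce Corollary \ref{cor:okval} to Theorem \ref{thm:main} by using the bigness of $D_Y$ to absorb the extra ample term $f^*A_Y$ that appears in the conclusion of Theorem \ref{thm:main}.

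First I fix an arbitrary ample divisor $A_Y$ on $Y$. Since $D_Y$ is big, for every sufficiently small rational number $\varepsilon > 0$ the class of $D_Y - \varepsilon A_Y$ remains big, hence admits an effective $\Q$-representative $D_Y^{(\varepsilon)} \sim_{\Q} D_Y - \varepsilon A_Y$. Combining this with the hypothesis $D \sim_{\Q} f^*D_Y + R$ gives
$$
D - \varepsilon f^*A_Y \;\sim_{\Q}\; f^*D_Y^{(\varepsilon)} + R,
$$
which has exactly the shape required to invoke Theorem \ref{thm:main} with the substitutions $D \rightsquigarrow D - \varepsilon f^*A_Y$, $D_Y \rightsquigarrow D_Y^{(\varepsilon)}$, $R \rightsquigarrow R$, and $A_Y \rightsquigarrow \varepsilon A_Y$.

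I then verify the hypotheses of Theorem \ref{thm:main} for this substitution. Conditions (1), (3), (5) are immediate; (2) is inherited verbatim from the weak positivity of $f_*\mathcal{O}_X(mR)$; (4) holds by construction of $D_Y^{(\varepsilon)}$. Since $D_Y^{(\varepsilon)}$ is big, its only Nakayama subvariety is $Y$ itself, which trivially lies in the flag $Y_\bullet$, while the condition on $F_\bullet$ is unchanged. Theorem \ref{thm:main} therefore yields
$$
\okval_{X_\bullet}(D) \;=\; \okval_{X_\bullet}\!\bigl((D - \varepsilon f^*A_Y) + \varepsilon f^*A_Y\bigr) \;\supseteq\; \okval_{Y_\bullet}(D_Y^{(\varepsilon)}) + \okval_{F_\bullet}(R|_F)
$$
for every sufficiently small $\varepsilon > 0$.

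To conclude, I let $\varepsilon \to 0^+$. The classes of $D_Y^{(\varepsilon)}$ converge to the class of $D_Y$ inside the big cone of $N^1(Y)$, so the continuity of the Okounkov body on the big cone \cite[Theorem B]{lm-nobody} implies $\okval_{Y_\bullet}(D_Y^{(\varepsilon)}) \to \okval_{Y_\bullet}(D_Y)$ in the Hausdorff distance. Since $\okval_{X_\bullet}(D)$ is closed and $\okval_{F_\bullet}(R|_F)$ does not depend on $\varepsilon$, the inclusion persists in the limit, yielding
$$
\okval_{X_\bullet}(D) \;\supseteq\; \okval_{Y_\bullet}(D_Y) + \okval_{F_\bullet}(R|_F).
$$
The inequality $\kappa(D) \geq \kappa(D_Y) + \kappa(R|_F)$ then follows at once from $\dim \okval_{X_\bullet}(D) = \kappa(D)$, $\dim \okval_{Y_\bullet}(D_Y) = \dim Y = \kappa(D_Y)$, and $\dim \okval_{F_\bullet}(R|_F) = \kappa(R|_F)$, together with the observation that the two summands lie in the orthogonal coordinate subspaces $\R^{\dim Y} \times \{0\}^{\dim F}$ and $\{0\}^{\dim Y} \times \R^{\dim F}$ dictated by the fiber-type structure of $X_\bullet$. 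The only nontrivial step is the passage to the limit, which is handled by the cited continuity theorem.
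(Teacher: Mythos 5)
Your proposal is correct and follows essentially the same route as the paper: write $D-\varepsilon f^*A_Y \sim_{\Q} f^*(D_Y-\varepsilon A_Y)+R$, apply Theorem \ref{thm:main} (using that $Y$ itself is the Nakayama subvariety of the big divisor $D_Y-\varepsilon A_Y$), and let $\varepsilon \to 0$. Your extra remarks (choosing an effective representative of $D_Y-\varepsilon A_Y$ and invoking continuity of Okounkov bodies on the big cone for the limit) only make explicit steps the paper leaves implicit.
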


\begin{proof}
Let $A_Y$ be an ample divisor on $Y$. Then $D_Y - \varepsilon A_Y$ is big for any sufficiently small rational number $\varepsilon > 0$, so $Y$ itself is a Nakayama subvariety of $D_Y - \varepsilon A_Y$. We write
$$
D-f^* \varepsilon A_Y  \sim_{\Q} f^*(D_Y - \varepsilon A_Y) + R.
$$
By applying Theorem \ref{thm:main}, we see that
$$
\okval_{X_\bullet}(D) \supseteq \okval_{Y_\bullet}(D_Y - \varepsilon A_Y) + \okval_{F_\bullet}(R|_F)
$$
for any sufficiently small rational number $\varepsilon > 0$. By taking $\varepsilon \to 0$, we obtain
$$
\okval_{X_\bullet}(D) \supseteq \okval_{Y_\bullet}(D_Y) + \okval_{F_\bullet}(R|_F).
$$
We then have
$$
\kappa(D) = \dim \okval_{X_\bullet}(D) \geq \dim ( \okval_{Y_\bullet}(D_Y) + \okval_{F_\bullet}(R|_F)) = \kappa(D_Y) + \kappa(R|_F),
$$
and we finish the proof.
\end{proof}

Theorem \ref{thm:okvalK} is an immediate consequence of Corollary \ref{cor:okval}.

\begin{proof}[Proof of Theorem \ref{thm:okvalK}]
By Viehweg's weak positivity theorem \cite[Theorem III]{V1}, we know that $f_*\mathcal{O}_X(mK_{X/Y} )$ is weakly positive for every integer $m>0$. Note that $K_X|_F = K_F$.  Then the first statement of the theorem  follows from Corollary \ref{cor:okval}. We also have $\kappa(K_X) \geq \kappa(K_Y) + \kappa(K_F)$. Recall that $K_Y$ is big. The easy addition lemma shows that $\kappa(K_X)\leq\dim Y+\kappa(K_F) = \kappa(K_Y) + \kappa(K_F)$. Thus $\kappa(K_X)=\kappa(K_Y)+\kappa(K_F)$.
\end{proof}

\section{Examples}\label{sec:examples}

In this section, we exhibit some relevant examples. First, we give a counterexample to the converse of Theorem \ref{thm:oklimK} (2).

\begin{example}\label{ex:isotrivialnotequal}
Let $S$ be a smooth projective minimal surface of general type. Suppose that there is a genus two fibration $f \colon S \to C$ over an elliptic curve $C$ such that $f$ is birationally isotrivial. Let $F$ be a general fiber of $f$. Then $\nu_{\BDPP}(K_C)=\dim \oklim_{C_\bullet}(K_C) = 0$ for every admissible flag $C_\bullet$ on $C$ and $\nu_{\BDPP}(K_F)=\dim \oklim_{F_\bullet}(K_F)=1$ for every admissible flag on $F$. However, $\nu_{\BDPP}(K_S)=\dim \oklim_{S_\bullet}(K_S) = \dim \okbd_{S_\bullet}(K_S)=2$ for every admissible flag $S_\bullet$ on $S$. Therefore, we have
$$
\oklim_{S_\bullet}(K_S) \supsetneq \oklim_{C_\bullet}(K_C) + \oklim_{F_\bullet}(K_F)
$$
for every fiber-type admissible flag $S_\bullet$ associated to admissible flags $C_\bullet, F_\bullet$ on $C, F$, respectively. Note also that the strict inequality
$$
\nu_{\BDPP}(K_S)>\nu_{\BDPP}(K_C)+\nu_{\BDPP}(K_F)
$$
holds even though $f$ is birationally isotrivial.
\end{example}

Next, we give a counterexample to the question stated in the introduction.

\begin{example}\label{ex:inverseinclusion}
Let $S$ be a smooth projective minimal surface with $\kappa(K_S)=1$. Suppose that $q(S)=2$ and there is a smooth genus two fibration $f \colon S \to C$ over an elliptic curve $C$. Then $f$ is birationally isotrivial, and the Iitaka fibration $g \colon S \to C'$ of $S$ has two double fibers (see \cite[Lemma 2.1]{Kar}). Notice that $S$ is not biratioinal to $C \times F$, where $F$ is a general fiber of $f$.
Let $x$ be a general point on $F$, and consider the admissible flag on $S$:
$$
S_\bullet: S \supseteq F \supseteq \{x\},
$$
which is a fiber-type admissible flag associated to $C_{\bullet}: C \supseteq \{f(F)\}$ and $F_\bullet: F \supseteq \{x\}$. Note that $\okval_{C_\bullet}(K_C)=\{0\}$. We have
$$
\okval_{S_\bullet}(K_S) = \frac{1}{\deg g|_F} \oklim_{S_\bullet}(K_S) = \frac{1}{\deg g|_F} \okval_{F_\bullet}(K_F).
$$
As $\deg g|_F \geq 2$, we get
$$
\okval_{S_\bullet}(K_S) \subsetneq  \okval_{C_\bullet}(K_C) + \okval_{F_\bullet}(K_F).
$$
This shows that the expected inclusion in the question of the introduction does not hold in general.
\end{example}

In the above example, we observe that
$$
\deg g|_F\cdot \okval_{S_\bullet}(K_S) \supseteq  \okval_{C_\bullet}(K_C) \times \okval_{F_\bullet}(K_F).
$$
Although our expectation in the introduction may fail, we wonder whether the following question would have an affirmative answer, which would imply the Iitaka conjecture.

\begin{question}\label{question}
Let $f\colon X\to Y$ be an algebraic fiber space with general fiber $F$ over a point $\eta\in Y$. Assume that $K_Y$ and $K_F$ are effective. Let $X_\bullet$ be a fiber-type admissible flag on $X$ associated to $Y_\bullet, F_\bullet$, where $Y_\bullet$ is an admissible flag on $Y$ centered at $\eta\in Y$ containing a Nakayama subvariety $Y'$ of $K_Y$ and $F_\bullet$ is an admissible flag on $F$ centered at a general point of $F$ containing a Nakayama subvariety $F'$ of $K_F$.
Then do there exist some positive numbers $\alpha,\beta,\gamma>0$ such that
$$
\alpha\okval_{X_\bullet}(K_X) \supseteq \beta\okval_{Y_\bullet}(K_Y) + \gamma\okval_{F_\bullet}(K_F)
$$
hold?
\end{question}

In Example \ref{ex:inverseinclusion}, we can take $\alpha=\deg g|_F$, $\beta=\gamma=1$. By \cite[Theorem C]{CPW-okbdab}, using the abundance of the canonical divisors, we can find a positive integer $\alpha>0$ such that  $\Delta_{X_\bullet}^{\lim}(K_X)=\alpha\cdot\okval_{X_\bullet}(K_X)$. Such an integer $\alpha$ is given by the degree of the canonical contraction on a minimal model restricted on the image of the Nakayama subvariety. See \cite{CPW-okbdab} for more details. Assuming the abundance conjecture \cite[Conjecture 3.8]{BDPP}, we have an affirmative answer to Question \ref{question} by Theorem \ref{thm:oklimK}.


\end{document}